\documentclass[a4paper,11pt, reqno, oneside]{amsart}

\usepackage[utf8]{inputenc}
\usepackage{amsmath, amsfonts, amssymb, amsthm, mathtools, bbm, enumerate}

\usepackage[all]{xy}
\usepackage[height=22.5cm, width=15.5cm]{geometry}
\usepackage[active]{srcltx}

\theoremstyle{plain}
\newtheorem{thm}{Theorem}[section]
\newtheorem{lemma}[thm]{Lemma}
\newtheorem{prop}[thm]{Proposition}
\newtheorem{cor}[thm]{Corollary}

\theoremstyle{definition}
\newtheorem{defi}[thm]{Definition}
\newtheorem{rmk}[thm]{Remark}
\newtheorem{ex}[thm]{Example}

\newcommand{\R}{\mathbb{R}}
\newcommand{\C}{\mathbb{C}}

\newcommand{\K}{\mathbb{K}}

\newcommand{\g}{\mathfrak{g}}
\newcommand{\h}{\mathfrak{h}}

\DeclareMathOperator{\Aut}{Aut}
\DeclareMathOperator{\Der}{Der}
\DeclareMathOperator{\id}{id}
\DeclareMathOperator{\Sym}{Sym}

\DeclareMathOperator{\Tr}{Tr}

\DeclareMathOperator{\GL}{GL}

\DeclareMathOperator{\SO}{SO}
\DeclareMathOperator{\End}{End}

\setcounter{tocdepth}{1}

\title{Conjugacy classes of $n$-tuples in semi-simple Jordan algebras}
\author{Hannah Bergner}
\address{Fakult\"at f\"ur Mathematik, Ruhr-Universit\"at Bochum, 
Universit\"atsstr. 150, D-44780 Bochum, Germany}
\email{Hannah.Bergner-C9q@rub.de}

\thanks{Financial support by SFB/TR 12 ``Symmetries and Universality 
  in Mesoscopic Systems'' of the DFG is gratefully acknowledged.}

\begin{document}

\begin{abstract}
Let $J$ be a finite-dimensional semi-simple Jordan algebra over an 
algebraically closed field  of characteristic $0$.
In this article, the diagonal action of the automorphism group
of $J$ on the $n$-fold product $J\times\ldots \times J$ is studied.
In particular, it is shown that the orbit through an $n$-tuple 
$x=(x_1,,\ldots,x_n)$ is closed if and only if the 
Jordan subalgebra generated by the elements $x_1,\ldots, x_n$ is semi-simple.
\end{abstract}

\maketitle

\section{Introduction}
Let $J$ be a finite-dimensional semi-simple Jordan algebra over an algebraically
closed field~$\mathbb K$ of characteristic $\mathrm{char}(\mathbb K)=0$.
Denote by $\Aut(J)$ the automorphism group of $J$, which naturally acts on $J$.
In this article, geometric properties of the diagonal action of $\Aut(J)$ on the
$n$-fold product of $J\times \ldots \times J$ are studied.
In particular, an algebraic characterization of the closed orbits is given.

In the case of a reductive linear algebraic group $G$ with Lie algebra 
$\mathfrak g$, 
R.~W.~Richardson studied geometric properties of the diagonal action of 
$G$ on the $n$-fold product of the Lie algebra $\mathfrak g$, see
\cite{Richardson}.
In particular, he showed that the orbit through an $n$-tuple 
$x=(x_1,\ldots,x_n)$ is closed if and only if the Lie algebra generated by 
$x_1,\ldots, x_n$ is reductive in $\mathfrak g$. Moreover, the closure of
$G\cdot x$ contains $0$ precisely if $x_1,\ldots,x_n$ are contained in a
nilpotent subalgebra of~$\mathfrak g$ which consists of nilpotent elements. 
These results obtained by R.~W.~Richardson generalize the results in 
the case $n=1$,
i.e. the adjoint action of a reductive linear algebraic group on its Lie 
algebra. There, an orbit through an element $x$ is closed if and only if the 
element $x$ is semi-simple. Furthermore, if $x=x_s+x_n$ is the decomposition of 
$x$ into its semi-simple and nilpotent part, then the orbit through $x_s$ is 
the unique closed orbit in the closure of the orbit through $x$.

Similar results can be obtained in the case of Jordan algebras:

\begin{thm}[see Section 6]\label{thm: thm1}
Let $J$ be a semi-simple Jordan algebra over an algebraically closed field of 
characteristic $0$, with automorphism group $\Aut(J)$. 
Consider the diagonal action of $\Aut(J)$ on the $n$-fold product 
$J\times \ldots \times J$, and let 
$x=(x_1,\ldots,x_n)\in J\times \ldots\times J$. Then:
\begin{enumerate}
 \item  The $\Aut(J)$-orbit through the $n$-tuple $x$ is closed
 if and only if the subalgebra $A(x)$ generated by $x_1,\ldots ,x_n$ is
 semi-simple.
 \item The closure of the orbit $\Aut(J)\cdot x$ contains $0$ if and only if 
 $A(x)$ is solvable.
\end{enumerate}
\end{thm}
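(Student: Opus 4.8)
The plan is to derive all four implications from the Hilbert--Mumford criterion for the reductive group $G:=\Aut(J)$ acting linearly on $V:=J\times\dots\times J$, for which $0$ is a fixed point, together with the dictionary between one-parameter subgroups of $G$ and $\Z$-gradings of $J$. A cocharacter $\lambda$ of $\Aut(J)$ is the same datum as a grading $J=\bigoplus_{k\in\Z}J_k$ of Jordan algebras: its grading derivation $D$ (acting by $k\cdot\id$ on $J_k$) lies in $\Der(J)=\operatorname{Lie}\Aut(J)$, and $\lambda(t)=t^{D}$. Then $P_\lambda:=\bigoplus_{k\ge0}J_k$ is a subalgebra, $N_\lambda:=\bigoplus_{k>0}J_k$ is a nilpotent ideal of it, the projection $\pi_\lambda\colon P_\lambda\to J_0$ along $N_\lambda$ is an algebra homomorphism, and for $v\in J$ the limit $\lim_{t\to0}\lambda(t)v$ exists iff $v\in P_\lambda$, in which case it equals $\pi_\lambda(v)$. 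Consequently $\lim_{t\to0}\lambda(t)x$ exists iff $A(x)\subseteq P_\lambda$, and it then equals $\bigl(\pi_\lambda(x_1),\dots,\pi_\lambda(x_n)\bigr)$, whose generated subalgebra is $\pi_\lambda(A(x))\cong A(x)/(A(x)\cap N_\lambda)$.

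Two of the four implications need only this dictionary, plus the standard facts (Hilbert--Mumford, and Birkes for possibly disconnected $G$) that $G\cdot x$ is closed iff $\lim_{t\to0}\lambda(t)x\in G\cdot x$ for every cocharacter $\lambda$ whose limit exists, and that $0\in\overline{G\cdot x}$ iff some cocharacter contracts $x$ to $0$. For the forward direction of (2): if $\lim_{t\to0}\lambda(t)x=0$ then every $x_i\in N_\lambda$, so $A(x)\subseteq N_\lambda$ is nilpotent, hence solvable. For the forward direction of (1): assume $G\cdot x$ closed and, for contradiction, $R:=\operatorname{rad}A(x)\neq0$. By the Wedderburn principal theorem $A(x)=S\oplus R$ with $S$ semisimple; the adapted-grading lemma stated below furnishes a cocharacter $\lambda$ with $S\subseteq J_0$ and $R\subseteq N_\lambda$, so $A(x)\subseteq P_\lambda$ and $A(x)\cap N_\lambda=R$. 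Then $y:=\lim_{t\to0}\lambda(t)x$ exists and $A(y)\cong A(x)/R=S$ has strictly smaller dimension than $A(x)$; but closedness gives $y\in\overline{G\cdot x}=G\cdot x$, so $A(y)$ is $G$-conjugate to $A(x)$, a contradiction. Therefore $A(x)$ is semisimple.

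For the converse in (1), assume $A(x)$ semisimple and let $\lambda$ be any cocharacter with $y:=\lim_{t\to0}\lambda(t)x$ existing. The kernel of $\pi_\lambda|_{A(x)}$ is the nil ideal $A(x)\cap N_\lambda$, which by Albert's theorem (finite-dimensional nil Jordan algebras in characteristic $0$ are nilpotent) is solvable, hence contained in $\operatorname{rad}A(x)=0$; so $\pi_\lambda$ restricts to an isomorphism $A(x)\xrightarrow{\ \sim\ }A(y)$ sending $x_i\mapsto y_i$. It remains to prove $y\in G\cdot x$, which I regard as the subtle point of this direction: inside the parabolic subgroup of $G$ defined by $\lambda$, with unipotent radical $U_\lambda$, the tuples $x$ and $y$ should lie in one $U_\lambda$-orbit, since $A(x)\subseteq P_\lambda$ and $A(y)\subseteq J_0$ are two homomorphic sections of $\pi_\lambda$ over the semisimple algebra $A(y)$, and any two such are conjugate by a Whitehead-type vanishing theorem ($H^1$ of a semisimple Jordan algebra with arbitrary module coefficients vanishes in characteristic $0$ --- the same input behind the Wedderburn principal theorem), an assertion one must refine to conjugacy inside $\Aut(J)$. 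Given $y\in G\cdot x$ for every such $\lambda$, Hilbert--Mumford yields that $G\cdot x$ is closed.

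The backward direction of (2), together with the grading lemma used above, reduces to the \emph{Key Lemma}: if $\mathfrak n\le J$ is a subalgebra all of whose elements are nilpotent, then $\mathfrak n\subseteq N_\lambda$ for some cocharacter $\lambda$, and if moreover a semisimple subalgebra $S$ normalizes $\mathfrak n$ with $S\oplus\mathfrak n$ a subalgebra in which $\mathfrak n$ is an ideal, then $\lambda$ can be chosen with $S\subseteq J_0$. Applying the first half to $\mathfrak n=A(x)$ when $A(x)$ is solvable (hence nil, by Albert) gives $\lim_{t\to0}\lambda(t)x=0$, i.e.\ $0\in\overline{G\cdot x}$, finishing (2); the second half applied to the Wedderburn decomposition of $A(x)$ is the adapted grading used above. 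I expect the Key Lemma to be the main obstacle. To prove it I would first reduce to $J$ simple (subalgebras and nilpotence project onto the simple factors, and the conclusion is then a product statement), then use that the multiplication operators $L_z$ and quadratic operators $P(z)$ for $z\in\mathfrak n$ generate a nilpotent associative subalgebra of $\End(J)$ (each is a nilpotent endomorphism, since $z$ is a nilpotent element of the semisimple algebra $J$, and $\mathfrak n$ is a nilpotent Jordan algebra), so $\mathfrak n$ strictly stabilizes a full flag of $J$; a suitable coarsening of this flag has stabilizer in $\Aut(J)$ a parabolic subgroup, and any defining cocharacter places $\mathfrak n$ in its nilradical. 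The normalizing refinement should follow by an equivariance/fixed-point argument over the reductive group $\{g\in\Aut(J):g|_S=\id\}$ acting on the non-empty variety of admissible cocharacters. (Alternatively one might transport the whole problem to Richardson's theorem via the Tits--Kantor--Koecher Lie algebra $\mathfrak g(J)$ --- in which $A(x)$ corresponds to the Lie subalgebra generated by the two copies of $x_1,\dots,x_n$, and $J$ semisimple to $\mathfrak g(J)$ semisimple --- but matching the $\Aut(J)$-action with the appropriate subgroup of $\Aut(\mathfrak g(J))$ requires comparable care.)
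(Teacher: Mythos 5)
Your overall architecture is the same as the paper's: translate cocharacters of $\Aut(J)$ into gradings of $J$, prove the forward direction of (1) by a dimension-drop argument on $A(y)=\pi_\lambda(A(x))$, the forward direction of (2) by noting $A(x)\subseteq J_{>0}(\lambda)$ is nil, and reduce everything else to (a) an ``adapted grading'' lemma and (b) the claim that the Birkes limit $y$ of a semi-simple $A(x)$ stays in $G\cdot x$. You correctly isolate (a) and (b) as the two hard points, but both are left essentially unproved, and the attacks you sketch would not go through as written. For (a), your flag argument has three unaddressed problems: the associative algebra generated by $\{L(z):z\in\mathfrak n\}$ being nilpotent is itself a nontrivial theorem (Jacobson, Cor.~8.2 -- individual nilpotence of the $L(z)$ does not suffice); the stabilizer in $\Aut(J)$ of a coarsened flag need not be parabolic, and in any case you need the cocharacter to land in $\Aut(J)$ rather than $\GL(J)$; and a flag in $J$ stabilized by $L(\mathfrak n)$ does not directly yield a \emph{Jordan-algebra} grading with $\mathfrak n\subseteq J_{>0}(\lambda)$. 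The paper avoids all of this by embedding $A=S+R$ into $\mathfrak{str}(J)=\Der(J)\oplus J$ as the $\theta$-invariant algebraic Lie subalgebra $[A,A]\oplus L(A)=([S,S]\oplus S)\oplus([A,R]\oplus R)$ (reductive part plus nilpotent ideal acting nilpotently on $J$) and invoking Richardson's Proposition~12.4 for the symmetric pair $(\mathrm{Str}(J)^\circ,\Aut(J)^\circ)$; the resulting cocharacter of $\Aut(J)^\circ$ grades $\mathfrak{str}(J)$ compatibly, and restricting to the summand $J$ gives exactly $S\subseteq J_0(\lambda)$, $R\subseteq J_{>0}(\lambda)$. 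Your proposed ``equivariance/fixed-point argument'' for the normalizing refinement is too vague to assess and is precisely the part Richardson's result delivers.

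For (b), a Whitehead-type conjugacy of the two sections of $\pi_\lambda$ over $A(y)$ is not enough: you need the conjugating element to lie in $\Aut(J)$ (not merely to be an isomorphism of abstract algebras or an automorphism of $J_{\geq 0}(\lambda)$) \emph{and} to carry $x_i$ to the specific limit $y_i=\psi_\lambda(x_i)$, not just $A(x)$ onto $A(y)$. The paper gets both from the precise form of the strict-conjugacy clause in the Jordan Levi--Malcev theorem (Penico/Jacobson): the conjugating map is $\exp(D)$ with $D=\sum_i[L(z_i),L(u_i)]$, $u_i$ in the radical $J_{>0}(\lambda)$ of $J_{\geq 0}(\lambda)$. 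Such a $D$ is an inner derivation of all of $J$, so $\exp(D)\in\Aut(J)^\circ$, and it strictly raises $\lambda$-degree, so $\psi_\lambda\circ\exp(D)=\psi_\lambda$ on $J_{\geq 0}(\lambda)$; combined with $\exp(D)(A(x))\subseteq J_0(\lambda)$ this forces $y_i=\psi_\lambda(x_i)=\exp(D)x_i$, hence $y\in G\cdot x$. You should replace your $H^1$-vanishing appeal by this explicit statement, since without the specific shape of $D$ the argument does not close.
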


Moreover, the closed orbit in the closure of an orbit $\Aut(J)\cdot x$
can be described in the following way:

\begin{thm}[see Section 7]\label{thm: thm2}
Let $x=(x_1,\ldots, x_n)\in J\times\ldots \times J$, and let $A(x)$ be the 
subalgebra generated by $x_1,\ldots ,x_n$. Moreover, let $R(x)$ be the radical
of $A(x)$, $S(x)\subseteq A(x)$ a semi-simple subalgebra with $A(x)=S(x)+R(x)$,
and $s_j\in S(x)$, $r_j\in R(x)$ such that $x_j=s_j+r_j$.
Then $\Aut(J)\cdot s=\Aut(J)\cdot (s_1,\ldots, s_n)$ is the
unique closed orbit in the closure of $\Aut(J)\cdot x$.
\end{thm}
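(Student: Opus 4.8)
The plan is to deduce Theorem~\ref{thm: thm2} from Theorem~\ref{thm: thm1} together with the standard geometric invariant theory of a reductive group acting on an affine variety. Recall that, since $J$ is semi-simple, $\Aut(J)$ is a reductive linear algebraic group; consequently, for the action on the affine variety $J\times\dots\times J$ the closure of any orbit contains a unique closed orbit, and by the Hilbert--Mumford--Kempf criterion (in the form valid for arbitrary reductive groups) every point of that closed orbit is of the form $\lim_{t\to 0}\lambda(t)\cdot x$ for some one-parameter subgroup $\lambda\colon\K^\times\to\Aut(J)$. It therefore suffices to prove (i) that $\Aut(J)\cdot s$ is closed, and (ii) that $\Aut(J)\cdot s$ is contained in $\overline{\Aut(J)\cdot x}$; the uniqueness assertion is then automatic.

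For (i) I would first check that the subalgebra generated by $s_1,\dots,s_n$ is precisely $S(x)$. Writing $\pi\colon A(x)\to A(x)/R(x)$ for the quotient map, its restriction to $S(x)$ is an algebra isomorphism onto $A(x)/R(x)$, and $\pi(x_j)=\pi(s_j)$, so the elements $\pi(s_1),\dots,\pi(s_n)$ generate $A(x)/R(x)$; hence the subalgebra generated by $s_1,\dots,s_n$, which is contained in $S(x)$ and maps onto $A(x)/R(x)$ under the injective map $\pi|_{S(x)}$, must equal $S(x)$. Since $S(x)$ is semi-simple, Theorem~\ref{thm: thm1}(1) applies and shows that $\Aut(J)\cdot s$ is closed.

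For (ii) let $O\subseteq\overline{\Aut(J)\cdot x}$ be the unique closed orbit, choose $y\in O$ and a one-parameter subgroup $\lambda$ of $\Aut(J)$ with $\lim_{t\to 0}\lambda(t)\cdot x=y$. Since $\lambda(t)$ is an algebra automorphism, it endows $J$ with a $\Z$-grading $J=\bigoplus_{k\in\Z}J^{(k)}$, on which $\lambda(t)$ acts by $t^{k}$; then $P:=\bigoplus_{k\ge0}J^{(k)}$ is a subalgebra of $J$, $N:=\bigoplus_{k\ge1}J^{(k)}$ is a nilpotent ideal of $P$, and for $v\in J$ the limit $\lim_{t\to0}\lambda(t)\cdot v$ exists exactly when $v\in P$, in which case it equals the degree-$0$ component $v^{(0)}$. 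The existence of the limit of $\lambda(t)\cdot x$ forces each $x_j$, and hence all of $A(x)$ and in particular the elements $s_j$ and $r_j$, to lie in $P$, and $y_j=x_j^{(0)}$. The projection $P\to P/N\cong J^{(0)}$ along $N$ is an algebra homomorphism sending $x_j$ to $y_j$, hence sending $A(x)$ onto $A(y)$; as $O$ is closed, $A(y)$ is semi-simple by Theorem~\ref{thm: thm1}(1), so the kernel $A(x)\cap N$ of this homomorphism contains the radical $R(x)$. Thus each $r_j$ lies in $N$, so $r_j^{(0)}=0$ and $y_j=x_j^{(0)}=s_j^{(0)}=\lim_{t\to0}\lambda(t)\cdot s_j$. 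Therefore $y=\lim_{t\to0}\lambda(t)\cdot s\in\overline{\Aut(J)\cdot s}$; since $\Aut(J)\cdot s$ is closed by (i), we get $y\in\Aut(J)\cdot s$ and hence $O=\Aut(J)\cdot y=\Aut(J)\cdot s$, which is (ii).

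I expect the main obstacle to be the identification $R(x)\subseteq N$: it is exactly the step that makes the degree-$0$ part of $x$ an $\Aut(J)$-translate of $s$, and it uses Theorem~\ref{thm: thm1}(1) in an essential way, via the closed orbit $O$. Some care is also needed because $\Aut(J)$ need not be connected, so the geometric invariant theory inputs (uniqueness of the closed orbit in an orbit closure and the one-parameter subgroup form of the Hilbert--Mumford criterion) must be cited in the version valid for general reductive groups. A more self-contained alternative would avoid the Hilbert--Mumford criterion and instead construct directly a one-parameter subgroup of $\Aut(J)$ fixing $S(x)$ pointwise and contracting $R(x)$ to $0$; producing such a subgroup out of the nilpotent algebra $R(x)$ and its $S(x)$-module structure inside $J$ looks like the harder route.
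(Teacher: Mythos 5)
Your proof is correct, but it reaches the key containment $\Aut(J)\cdot s\subseteq\overline{\Aut(J)\cdot x}$ by a genuinely different route than the paper. The paper simply applies Proposition~\ref{prop: ex of Jordan grading} (already established en route to Theorem~\ref{thm: thm1}) to the decomposition $A(x)=S(x)+R(x)$: this produces a one-parameter subgroup $\lambda$ with $S(x)\subseteq J_0(\lambda)$ and $R(x)\subseteq J_{>0}(\lambda)$, so that $s=\lim_{t\to 0}\lambda(t)\cdot x$ lies in the orbit closure directly; it then argues, essentially as in your step (i), that $A(s)=S(x)$ is semi-simple, so $\Aut(J)\cdot s$ is closed and uniqueness does the rest. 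You instead avoid the adapted one-parameter subgroup entirely: you take the $\lambda$ supplied by Birkes' theorem (Theorem~\ref{thm: ex of one-param}), whose limit $y$ lies in the unique closed orbit $O$, and use the semi-simplicity of $A(y)$ (via Theorem~\ref{thm: thm1}(1)) to conclude $R(x)\subseteq A(x)\cap J_{>0}(\lambda)$, whence $y=\lim_{t\to 0}\lambda(t)\cdot s$ and $O=\Aut(J)\cdot s$. Both arguments are sound; the paper's is shorter because the hard construction is already available, while yours trades that input for a second application of the closed-orbit criterion, so it would work knowing only Theorem~\ref{thm: thm1}(1) and Birkes' theorem as black boxes. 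The alternative you call ``the harder route'' at the end --- building a one-parameter subgroup fixing $S(x)$ and contracting $R(x)$ --- is exactly what the paper does, and it comes for free from Proposition~\ref{prop: ex of Jordan grading}. One small wording point: Birkes' theorem yields \emph{some} point of $O$ as a limit $\lim_{t\to 0}\lambda(t)\cdot x$, not every point, but your argument only needs one such pair $(y,\lambda)$.
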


The methods to prove the above statements are similar to those used by
R.W. Richardson in \cite{Richardson} in the case of Lie groups and Lie 
algebras.

The key ingredient for the proof is the existence of appropriate
one-parameter subgroups. 
Firstly, it is needed that if a reductive linear algebraic group $G$ acts on an 
affine variety $X$, then for any $x\in X$ there exists a one-parameter
subgroup $\lambda:\mathbb K^\times \rightarrow G$ such that 
$\lim_{t\to 0} \lambda(t)\cdot x$ exists and is contained in the unique closed
orbit in the closure of the orbit $G\cdot x$ (see \cite{Birkes}).

Secondly, the following proposition on the existence of one-parameter subgroups
in the automorphism group of a semi-simple Jordan algebra $J$ satisfying
suitable properties with respect to subalgebras of $J$ is needed:

\begin{prop}[see Section 6]\label{prop: ex of Jordan grading intro}
 Let $J$ be a semi-simple Jordan algebra. Let $A$ be a subalgebra of $J$,
 $R$ the radical of $A$, and let $S$ be a semi-simple subalgebra such that
  $A=S+R$.
  Then there exists a one-parameter subgroup 
 $\lambda:\mathbb K^\times \rightarrow \Aut(J)$ such that
 \begin{align*}
  &A\subseteq J_{\geq 0}(\lambda),\\
  &S\subseteq J_0(\lambda),\text{ and }\\
  & R\subseteq J_{>0}(\lambda),
 \end{align*}
 where 
 $J_k(\lambda)=\{z\in J\,|\,\lambda(t)\cdot z=t^kz
 \text{ for all }t\in \K^\times\}$ for $k\in \mathbb Z$,
 $J_{\geq 0}(\lambda)=\bigoplus_{k\geq 0}J_k(\lambda)$, and
 $J_{>0}(\lambda)=\bigoplus_{k>0}J_k(\lambda)$.
\end{prop}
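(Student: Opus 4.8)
The plan is to build the one-parameter subgroup $\lambda$ in two stages, first handling the radical $R$ via a nilpotent-type grading and then correcting on the semisimple part $S$. The starting point is the structure theory: write $A=S+R$ with $R$ the (solvable) radical. The key classical fact I would invoke is that, since $\mathrm{char}(\K)=0$, the radical $R$ of the Jordan algebra $A$ consists of elements whose action is ``nilpotent'' in a suitable sense — more precisely, the multiplication operators $L_r$ for $r\in R$ are nilpotent on $A$ (equivalently $R$ is a nil ideal, and in fact consists of properly nilpotent elements). Because $J$ is semisimple it carries a nondegenerate trace form, and the structure group / inner automorphism group is reductive; the derivation algebra $\Der(J)$ is reductive and $\Aut(J)$ is a reductive algebraic group. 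So I am looking for a cocharacter of $\Aut(J)$, which is the same as a semisimple derivation $D\in\Der(J)$ with integer eigenvalues, such that $D$ is $\geq 0$ on $A$, vanishes on $S$, and is $>0$ on $R$.

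First I would produce, inside $\Der(A)$ or by working in a suitable realization, a derivation that is adapted to the filtration of $A$ by powers of $R$: set $R=R^{(1)}\supseteq R^{(2)}\supseteq\cdots$ (the derived series, or the powers $R^k$ — whichever is the right notion so that the quotients are the ``layers''), choose a complement decomposition $A=S\oplus (R/R^2)\oplus\cdots$ of vector spaces refining this filtration, and define a diagonalizable operator $E$ on $A$ acting as $0$ on $S$ and as multiplication by $k$ on the $k$-th layer of $R$. The point is to check this $E$ is a \emph{derivation} of $A$: this uses that $S\cdot S\subseteq S$, $S\cdot R\subseteq R$, and $R^i\cdot R^j\subseteq R^{i+j}$ (the last being exactly the statement that the filtration is multiplicative), so $E$ respects products and satisfies the Leibniz rule on the associated graded, hence on $A$ since $E$ is the grading derivation of a Jordan grading of $A$. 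Then $E$ is semisimple with nonnegative integer eigenvalues, is $0$ on $S$ and strictly positive on $R$.

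The main obstacle — and the step I expect to require the most care — is extending this derivation of the subalgebra $A$ to a derivation of the ambient algebra $J$, or more precisely producing a cocharacter of $\Aut(J)$ realizing the required grading on $A$ \emph{globally on $J$}. One cannot in general extend an arbitrary derivation of a subalgebra to the whole algebra. The right tool here is the theory of Jordan algebras graded by a torus together with the semisimplicity of $J$: the grading derivation $E$ of $A$ is itself an element of a reductive group (the structure group of $A$, or $\Aut$ of a suitable enlargement), and one uses that $A\subseteq J$ together with the nondegenerate trace form on $J$ to split $J=A\oplus A^{\perp}$ as $A$-modules. Concretely, I would embed $A$ into $J$, use that the idempotents of $S$ (a Peirce decomposition relative to the maximal idempotent of $S$, or of $A$) give a decomposition of $J$, and observe that the nilpotent part $R$ of $A$ acts nilpotently on $J$, so by the Jacobson–Morozov-type machinery in the Jordan setting (or by directly invoking that $\Aut(J)$ is reductive and that a nil subalgebra lies in the nilradical of a parabolic) there is a cocharacter $\lambda$ of $\Aut(J)$ whose weight $0$ space contains the centralizer data of $S$ and whose positive part contains $R$. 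Equivalently: choose a maximal torus of $\Aut(J)$ compatible with a maximal system of orthogonal idempotents adapted to $S$, realize $E$ as an integral combination of the corresponding coordinate cocharacters after a conjugation, and check by the Peirce relations that the induced grading of $J$ satisfies $S\subseteq J_0(\lambda)$, $R\subseteq J_{>0}(\lambda)$, hence $A=S+R\subseteq J_{\geq 0}(\lambda)$. I would also need to verify that the eigenvalues can be taken integral (rescaling) and that $\lambda$ indeed lands in $\Aut(J)$ and not merely the structure group — this follows since $E$ is a derivation and $\exp(tE)$-type flows integrate to automorphisms because $\Aut(J)$ is the algebraic group with Lie algebra $\Der(J)$.

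Finally I would assemble the pieces: the cocharacter $\lambda$ constructed above satisfies $J_k(\lambda)\cdot J_l(\lambda)\subseteq J_{k+l}(\lambda)$ automatically (a cocharacter of the automorphism group always induces a Jordan grading), so $J_{\geq 0}(\lambda)$ is a subalgebra containing $A$, $J_0(\lambda)$ is a subalgebra containing $S$, and $J_{>0}(\lambda)$ is an ideal in $J_{\geq 0}(\lambda)$ containing $R$; this is exactly the assertion of the proposition.
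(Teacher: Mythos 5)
Your plan has two genuine gaps, and the second is exactly where the content of the proposition lies. First, the operator $E$ you define on $A$ (acting by $k$ on a chosen vector-space complement of the $(k+1)$-st layer inside the $k$-th layer of the filtration by powers of $R$, and by $0$ on $S$) is not a derivation of $A$ for an arbitrary choice of complements: if $x,y$ lie in layers $i$ and $j$, then $E(x)\circ y+x\circ E(y)=(i+j)\,x\circ y$, but $x\circ y$ lies in the $(i+j)$-th step of the filtration and may have nonzero components in deeper layers, so $E(x\circ y)\neq (i+j)\,x\circ y$ in general. The Leibniz rule holds on the associated graded algebra, not on $A$ itself, and upgrading from one to the other is not automatic. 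Second, and more seriously, the passage from a grading of $A$ to a cocharacter of $\Aut(J)$ is only gestured at: you list several candidate tools (splitting $J=A\oplus A^{\perp}$, a Jordan Jacobson--Morozov, Peirce decompositions relative to idempotents of $S$, parabolic nilradicals) without carrying any of them out, and none obviously controls where $R$ lands --- an element of $R$ is nilpotent in $J$, but that alone does not place it in the strictly positive part of a torus grading built from idempotents of $S$. Since, as you yourself note, a derivation of a subalgebra cannot in general be extended to $J$, this step cannot be waved through; your closing remark that ``$\exp(tE)$-type flows integrate to automorphisms'' presupposes a derivation of $J$, which is precisely what has not been produced.

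The paper avoids both problems by never working with derivations of $A$ at all. It forms the Lie subalgebra $[A,A]\oplus A=\left\{\left.\sum_i[L(a_i),L(b_i)]+L(a)\right|a_i,b_i,a\in A\right\}\subseteq\mathfrak{str}(J)\subseteq\mathfrak{gl}(J)$, whose elements are already globally defined endomorphisms of $J$, so no extension problem arises. It shows this subalgebra is algebraic, stable under the involution $\theta(X)=-\,^tX$ whose fixed-point group is $\Aut(J)^\circ$, and decomposes as $([S,S]\oplus S)\oplus([A,R]\oplus R)$ with the first summand reductive in $\mathfrak{gl}(J)$ and the second a nilpotent ideal acting nilpotently on $J$. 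Proposition 12.4 of Richardson then yields a one-parameter subgroup of $\Aut(J)^\circ$ that is nonnegative on $[A,A]\oplus A$, zero on $[S,S]\oplus S$, and strictly positive on $[A,R]\oplus R$; restricting to the summand $L(A)\cong A$ gives the proposition. If you want to salvage your approach, this reduction to a known statement about $\theta$-stable subalgebras of the reductive Lie algebra $\mathfrak{str}(J)$ is the step you are missing.
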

To prove this proposition, the structure
of the Lie subalgebra 
$$[A,A]\oplus A=\left\{\left.\sum_{i=1}^l [L(a_i),L(b_i)]+L(a)\right|
a_i,b_i,a\in A\right\}\subseteq \mathfrak{gl}(J)$$ 
is studied, where $L(z)$ denotes the left-multiplication $J\rightarrow J$
with $z$ for any $z\in J$.
Then, a result on the existence of one-parameter subgroups in 
Lie algebra invariant under an involution is applied 
(see \cite{Richardson}, Proposition 12.4).

I would like to thank E.~B.~Vinberg for drawing my attention to 
the results of R.~W.~Richardson (\cite{Richardson}) and raising the question
whether similar statements hold true in the case of Jordan algebras. 
Moreover, I am grateful for his interest in this work and the invitation 
to Lomonosov Moscow State University in April 2014, where part of this paper 
was written.

\section{Basic definitions}
In the following, a few basic definitions related to Jordan algebras are 
recalled,
see also e.g. \cite{BraunKoecher} and \cite{FarautKoranyi}.

\begin{defi}[Jordan algebra]
 A Jordan algebra is a vector space $J$ (over a field $\K$) together with a 
 bilinear map $J\times J\rightarrow J$, $(x,y)\mapsto x\circ y$, satisfying 
 \begin{enumerate}[$(i)$]
  \item $x\circ y=y\circ x$ and 
  \item $x\circ (x^2\circ y)=x^2\circ(x\circ y)$
 \end{enumerate}
 for all $x,y\in J$.
 
 For any $x\in V$ let $L(x):J\rightarrow J$ denote the linear map defined by 
$L(x)(y)= x\circ y$.
\end{defi}

The second property in the definition of a Jordan algebra is equivalent to 
requiring that $L(x)$ and $L(x^2)$ commute for any $x\in J$, i.e. 
$[L(x),L(x^2)]=0$.

In the following, we will assume the field $\K$ to have characteristic 
$\mathrm{char}(\K)=0$.
Moreover, the dimension of the vector space $J$ of a Jordan algebra
will be assumed to be finite.

\begin{ex}
 If $A$ is any associative algebra (over $\K$), then a Jordan algebra structure 
 on $A$ can be defined by setting
 \begin{equation*}
  x\circ y=\frac 1 2 (xy+yx).
 \end{equation*}
\end{ex}
A Jordan algebra $J$ is called special if there exists a finite-dimensional 
 associative algebra~$A$ such that $J$ is isomorphic to a subalgebra of the 
 Jordan algebra $(A,\circ)$.

\begin{defi}
 A derivation of a Jordan algebra $J$ is a linear transformation
 $D:J\rightarrow J$
 such that $D(x\circ y)=Dx\circ y+x\circ Dy$ for all $x,y\in J$,
 which is equivalent to $[D, L(x)]=L(Dx)$ for any $x\in J$.
 
 Let $\Der(J)$ denote the set of derivations of $J$. This is a Lie algebra
 with respect to the usual bracket $[D_1,D_2]=D_1D_2-D_2D_1$.
\end{defi}

If $x,y\in J$, then the commutator $[L(x),L(y)]:J\rightarrow J$ is a derivation
of $J$ (see e.g. \cite{FarautKoranyi}, Proposition II.4.1).

\begin{defi}
 A derivation $D$ of a Jordan algebra $J$ is called inner if there are elements
 $x_i,y_i$, $i=1,\ldots, l$, such that $D=\sum_{i=1}^{l} [L(x_i),L(y_i)]$.
\end{defi}

\begin{defi}[quadratic representation]
 Let $J$ be Jordan algebra. For any $x\in J$ define a
 linear map $P(x):J\rightarrow J$, $P(x)=2L(x)^2-L(x^2)$.
 The assignment $P:J\rightarrow \End(J)$, $x\mapsto P(x)$,
 is called the quadratic representation of $J$.
\end{defi}

\section{Structure Theory of Jordan Algebras}
In this section, facts about the structure theory of Jordan algebras, which
will be needed later, are collected. For more details see \cite{Albert}
and \cite{Jacobson}.

Using the results of \cite{Albert} on the structure theory of Jordan algebras,
the radical of a Jordan algebra can be defined as follows:

\begin{defi}
 The radical of a Jordan algebra $J$ is the greatest ideal of $J$ consisting 
 of nilpotent elements. 
 
\end{defi}

\begin{defi}
 A Jordan algebra whose radical vanishes is called semi-simple. 
 It is called solvable if it equals its own radical.
\end{defi}

\begin{defi}
 Let $J$ be a Jordan algebra.
 Define the trace form $\tau=\tau_J$ on $J$ by setting
 $\tau(x,y)=\Tr (L(x\circ y))$ for any $x,y\in J$.
\end{defi}

\begin{defi}
 An inner product $(\cdot, \cdot)$ on a Jordan algebra $J$ is called 
 associative if $(x\circ y,z)=(x,y\circ z)$ for all $x,y,z\in J$.
 This is equivalent to requiring that each left multiplication $L(x)$, $x\in J$,
 is symmetric with respect to the inner product.
\end{defi}

\begin{rmk}[c.f. Proposition II.4.3 in \cite{FarautKoranyi}]
 The trace form $\tau$ on a Jordan algebra $J$ is an associative symmetric 
 bilinear form, since 
 $$\tau(x\circ z,y)-\tau(x,z\circ y)=\Tr(L((x\circ z)\circ y-x\circ(z\circ y)))
 =\Tr([[L(y),L(x)],L(z)])=0 .$$
\end{rmk}

\begin{thm}[see \cite{Albert}, $\S$ 8]
 A Jordan algebra $J$ is semi-simple if and only if its trace form $\tau_J$ 
 is non-degenerate.
\end{thm}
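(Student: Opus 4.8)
The plan is to prove the sharper statement that the \emph{radical of the trace form},
\[
\operatorname{rad}(\tau)=\{x\in J\mid \tau(x,y)=0\ \text{for all }y\in J\},
\]
coincides with the radical $R$ of $J$; the asserted equivalence then follows at once, since $J$ is semi-simple exactly when $R=0$, whereas $\tau$ is non-degenerate exactly when $\operatorname{rad}(\tau)=0$. First I would record that $\operatorname{rad}(\tau)$ is an ideal of $J$, which is immediate from the associativity of $\tau$ noted above: if $x\in\operatorname{rad}(\tau)$ and $z\in J$, then $\tau(z\circ x,y)=\tau(x,z\circ y)=0$ for every $y\in J$, so $z\circ x\in\operatorname{rad}(\tau)$.

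For the inclusion $R\subseteq\operatorname{rad}(\tau)$ I would argue as follows. Let $x\in R$ and $y\in J$. Since $R$ is an ideal we have $x\circ y\in R$, so $x\circ y$ is a nilpotent element; and a nilpotent element $w$ has nilpotent left multiplication $L(w)$ (for a special Jordan algebra this follows by writing $L(w)=\tfrac12(L^A_w+R^A_w)$ as a sum of two commuting nilpotent operators in the enveloping associative algebra $A$, and the general case is covered by the structure theory of \cite{Albert}). Hence $\tau(x,y)=\Tr L(x\circ y)=0$, so that $R\subseteq\operatorname{rad}(\tau)$. In particular, if $J$ is not semi-simple then $\operatorname{rad}(\tau)\neq 0$ and $\tau$ is degenerate.

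The reverse inclusion $\operatorname{rad}(\tau)\subseteq R$ is the heart of the matter. Fix $x\in\operatorname{rad}(\tau)$. Using power-associativity of $J$ (so that $x^{k-1}\circ x=x^{k}$) together with the definition of $\tau$, I would compute, for every $k\geq 1$,
\[
\Tr L(x^{k})=\Tr L(x^{k-1}\circ x)=\tau(x^{k-1},x)=0,
\]
the last equality holding because $x\in\operatorname{rad}(\tau)$. Thus all the traces $\Tr L(x^{k})$ vanish, and the remaining, decisive step is to conclude from this that $x$ is nilpotent. Granting this, $\operatorname{rad}(\tau)$ is an ideal all of whose elements are nilpotent, hence a nil ideal, so $\operatorname{rad}(\tau)\subseteq R$ by maximality of the radical; combined with the previous paragraph this gives $\operatorname{rad}(\tau)=R$ and completes the proof.

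The main obstacle is therefore the implication
\[
\Tr L(x^{k})=0\ \text{for all }k\geq 1\ \Longrightarrow\ x\ \text{nilpotent},
\]
which I expect to require the generic-trace machinery of \cite{Albert}, $\S 8$: the eigenvalues of the left multiplications $L(x^{k})$ are controlled by the generic minimal polynomial of $x$, and in characteristic $0$ the vanishing of all these power traces forces, via Newton's identities, the vanishing of every non-leading coefficient of that polynomial, i.e. the nilpotence of $x$. This is precisely the place where finite-dimensionality and the hypothesis $\mathrm{char}(\K)=0$ enter the argument.
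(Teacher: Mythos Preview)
The paper does not supply its own proof of this theorem: it is stated with a citation to \cite{Albert}, \S 8, and used as background structure theory. There is therefore no in-paper argument against which to compare your proposal.

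Assessed on its own merits, your outline is the classical route and is essentially Albert's. The reduction to $\operatorname{rad}(\tau)=R$ is the right framing, and the computation $\Tr L(x^{k})=\tau(x^{k-1},x)=0$ is clean. Two remarks. First, for $R\subseteq\operatorname{rad}(\tau)$ you invoke ``$w$ nilpotent $\Rightarrow L(w)$ nilpotent''; this is true but is itself a theorem (the paper later cites it as \cite{Koecher}, Chapter~III, Theorem~5), so your deferral there is appropriate---and in fact a slightly slicker variant is to use directly that the multiplication algebra of a nil ideal acts nilpotently. Second, you correctly isolate the real content as the implication ``$\Tr L(x^{k})=0$ for all $k$ $\Rightarrow$ $x$ nilpotent'' and point to the generic-trace machinery and Newton's identities in characteristic~$0$; that is precisely what Albert's \S 8 develops, so your proposal is less an independent proof than a faithful sketch of the cited source. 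There is no genuine gap beyond the one you already flag.
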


\begin{thm}[see e.g. \cite{Koecher}, Chapter III, Theorem 10]
 A Jordan algebra $J$ containing an identity element $e$ is semi-simple 
 if there exists a non-degenerate associative symmetric bilinear form on $J$.
\end{thm}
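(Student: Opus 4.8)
The plan is to show directly that the radical $R$ of $J$ vanishes, since by definition $J$ is semi-simple precisely when $R=0$. I would prove this by establishing that $R$ is contained in the radical of the given form, $\{z\in J\mid(z,w)=0\text{ for all }w\in J\}$; as the form is non-degenerate this radical is $0$, forcing $R=0$. This runs parallel to Albert's theorem quoted just above: there the relevant form is the trace form $\tau$, whose radical always contains $R$, and non-degeneracy of $\tau$ is exactly semi-simplicity; the task here is to emulate that mechanism for the given, a priori arbitrary, associative form.

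The first step uses the identity element to collapse the form to a single functional. Associativity together with $e\circ z=z$ gives, for all $x,y\in J$,
\[
(x,y)=(e\circ x,y)=(e,x\circ y)=\ell(x\circ y),\qquad \ell(z):=(e,z).
\]
Since $R$ is an ideal, $r\circ y\in R$ whenever $r\in R$ and $y\in J$, so $(r,y)=\ell(r\circ y)$ with $r\circ y\in R$. Hence $R$ lies in the radical of the form as soon as $\ell$ vanishes on $R$, and the entire problem reduces to proving $\ell|_R=0$.

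To attack $\ell|_R=0$ I would invoke the structure theory of Section~3. The radical $R$ is a nil ideal and, in the finite-dimensional characteristic-zero setting, is nilpotent; consequently, for each $r\in R$ the left multiplication $L(r)$ is a nilpotent endomorphism of $J$ with image in $R$. Nilpotency gives $\Tr L(z)=0$ for all $z\in R$, which is exactly what makes $R\subseteq\{z:\tau(z,\cdot)=0\}$ in the trace-form model. The goal is to run the analogous argument for $\ell$, exploiting that every $L(r)$ is self-adjoint with respect to $(\cdot,\cdot)$ and that $r=L(r)e$, so that the value $\ell(r)=(e,L(r)e)$ can be pushed into deeper layers of the ideal.

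The hard part is precisely this transfer: proving $\ell(r)=(e,r)=0$ for $r\in R$. Associativity by itself yields only that $R$ is totally isotropic for the form and is acted on self-adjointly by the operators $L(r)$, which does not suffice to annihilate $\ell$. The natural line of attack is to filter $R$ by its nilpotency series $R\supseteq R^{[2]}\supseteq\cdots\supseteq R^{[m]}=0$ and argue downward: on the top nonzero layer the products $r\circ y$ land one layer deeper, so the corresponding values of $\ell$ collapse, and one lifts this vanishing step by step. Making this filtration genuinely force $\ell|_R=0$ — so that all three hypotheses (the identity, non-degeneracy, and $\mathrm{char}(\K)=0$ entering through nilpotency of the radical) are used essentially — is the crux of the proof; once $\ell|_R=0$ is secured, one concludes $R\subseteq\mathrm{rad}(\cdot,\cdot)=0$ and hence that $J$ is semi-simple.
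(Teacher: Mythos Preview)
The paper does not prove this statement; it merely cites Koecher. So there is no in-paper argument to compare against, and your attempt must be judged on its own.

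Your proof has a genuine and irreparable gap at precisely the step you flag as ``the hard part,'' namely $\ell|_R=0$. No filtration will close it, because the claim is \emph{false} for a general non-degenerate associative form. Take the two-dimensional commutative (hence Jordan) algebra $J=\K[t]/(t^2)$ with identity $e$ and nilpotent $x$ (the class of $t$), and equip it with the symmetric bilinear form determined by $(e,e)=(x,x)=0$, $(e,x)=1$. A direct check on the basis shows this form is associative and non-degenerate, yet the radical is $R=\K x\neq 0$ and $\ell(x)=(e,x)=1\neq 0$. Thus the theorem, read literally with the paper's definition of the radical, is false; this is the Jordan-algebra analogue of the familiar fact that associative Frobenius algebras need not be semi-simple. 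Your filtration sketch breaks at the very first step: on the bottom layer the products $r\circ y$ with $y\in J$ (as opposed to $y\in R$) do not drop a level, since already $r\circ e=r$.

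What rescues the paper's only use of this result (the Remark in Section~5 showing $J_0(\lambda)$ is semi-simple) is that the form there is not arbitrary: it is the restriction of the ambient trace form $\tau_J$ of the semi-simple algebra $J$, so $\ell(z)=\tau_J(e,z)=\Tr_J L(z)$. For any nilpotent $z\in J$ the operator $L(z)$ is nilpotent on $J$ (cf.\ the later remark in Section~6 citing Koecher, Chapter~III, Theorem~5), hence $\ell(z)=0$; in particular $\ell$ vanishes on the radical of $J_0(\lambda)$ with no filtration needed, and then your reduction $(r,y)=\ell(r\circ y)=0$ finishes the argument. Koecher's actual Theorem~10 is about a trace-type form (the generic trace), not an arbitrary associative one; the paper's paraphrase is too permissive, and your attempt founders on exactly that over-generality.
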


\begin{prop}[see \cite{Jacobson}, Theorem 9.2]
 Let $J$ be a Jordan algebra, and $S$ a semi-simple subalgebra of $J$. Then any
 derivation $D:S\rightarrow J$ can be extended to an inner 
 derivation of $J$, i.e. there exist elements $a_i,b_i\in J$, $i=1,\ldots,l$, 
 such that $D=\sum_{i=1}^{l}[L(a_i),L(b_i)]$.
\end{prop}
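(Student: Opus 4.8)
The plan is to regard $J$ as a bimodule over the semi-simple subalgebra $S$ via the left multiplications $L(s)$, $s\in S$. In this language a derivation $D\colon S\to J$ is precisely a $1$-cocycle of $S$ with coefficients in the module $J$, and the inner derivations $\sum_i[L(a_i),L(b_i)]$ with $a_i,b_i\in J$, restricted to $S$, are exactly the $1$-coboundaries. Thus the assertion is equivalent to the vanishing of the first cohomology $H^1(S,J)$, i.e. to the Jordan analogue of Whitehead's first lemma; once this is known for $J$ as a module, the matching inner derivation is automatically of the required form $\sum_i[L(a_i),L(b_i)]$.

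First I would reduce to the unital situation by a Peirce argument, which also makes the coboundary concrete. A finite-dimensional semi-simple $S$ has a unit $e$, which is an idempotent of $J$. From $D(e)=D(e\circ e)=2\,e\circ D(e)$ one reads off $e\circ D(e)=\tfrac12 D(e)$, so $u:=D(e)$ lies in the Peirce $\tfrac12$-space $J_{1/2}(e)$. Using $e\circ u=\tfrac12 u$ a direct computation gives $[L(e),L(u)](e)=-\tfrac14 u$, so the inner derivation $4[L(e),L(D(e))]$ sends $e$ to $-D(e)$; adding it to $D$ I may assume $D(e)=0$. Then for $s\in S$ one has $s=e\circ s$ and hence $D(s)=e\circ D(s)$, so $D$ maps $S$ into the unital Jordan subalgebra $J_1(e)$, which contains $S$ with the same unit $e$. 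Since $J_1(e)$ is a subalgebra, any inner derivation $[L(a),L(b)]$ with $a,b\in J_1(e)$ has the same restriction to $S$ whether computed in $J_1(e)$ or in $J$, so it suffices to build the extension inside $J_1(e)$.

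The engine of the argument is complete reducibility of modules. Since $S$ is finite-dimensional semi-simple over a field of characteristic $0$, its universal multiplication envelope is a separable associative algebra, so every finite-dimensional $S$-bimodule decomposes into irreducibles; this is the Jordan counterpart of Weyl's theorem and rests on the non-degeneracy of the trace form $\tau_S$ established above, together with a Casimir-type averaging over the envelope. Complete reducibility yields $H^1(S,M)=0$ for every module $M$, and applying this to $M=J$ (equivalently $M=J_1(e)$) shows that the cocycle $D$ is a coboundary, hence agrees on $S$ with an inner derivation $\sum_i[L(a_i),L(b_i)]$ of $J$.

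The main obstacle will be the complete-reducibility/cohomology-vanishing step itself: establishing that the universal multiplication envelope of a semi-simple Jordan algebra is separable, and thereby that $H^1$ vanishes. This is the genuinely nontrivial structural input, and one must check that it is proved independently of the extension theorem (from the trace form and a Casimir operator) so that the argument is not circular. The Peirce normalization to $D(e)=0$ is what keeps the translation of the abstract coboundary into the explicit form $\sum_i[L(a_i),L(b_i)]$, $a_i,b_i\in J$, transparent; the remaining work is routine bookkeeping ensuring the inner derivation produced inside $J_1(e)$ is the restriction of a genuine inner derivation of all of $J$.
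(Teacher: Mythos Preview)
The paper does not prove this proposition; it is quoted verbatim as Theorem~9.2 of Jacobson's 1951 paper and used as a black box in the subsequent development (in particular in the Levi--Malcev theorem and in Proposition~\ref{prop: ex of Jordan grading}). There is therefore no in-paper argument to compare your proposal against.

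Your outline is essentially Jacobson's own strategy, recast in cohomological language: view $J$ as an $S$-bimodule via left multiplication, interpret a derivation $D\colon S\to J$ as a $1$-cocycle, and reduce to a Whitehead-type vanishing $H^1(S,J)=0$ resting on the separability of the universal multiplication envelope $U(S)$. The Peirce normalisation $D(e)=0$ and the passage to $J_1(e)$ are correct, and you rightly flag the envelope step as the substantive input.

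Two points deserve tightening. First, the sentence ``complete reducibility yields $H^1(S,M)=0$'' elides a genuine step: semi-simplicity of $U(S)$ gives complete reducibility of $S$-bimodules, but the vanishing of $H^1$ is not a formal consequence of decomposability alone; Jacobson supplies a separate Casimir-type averaging argument, and you should make clear that this, not module splitting, is what kills the cocycle. Second, the natural module-theoretic $1$-coboundaries are restrictions of $[L(a),L(m)]$ with $a\in S$ and $m\in J$, which is \emph{a priori} a smaller class than the proposition allows ($a_i,b_i\in J$); your approach thus actually proves a slightly sharper statement, but you should say so explicitly rather than asserting the two classes coincide. Finally, be sure not to invoke the conjugacy clause of the Jordan Levi--Malcev theorem anywhere: in Jacobson's paper (Theorem~9.3) and in the present paper's logical order, that is \emph{derived from} the proposition you are proving, so using it here would be circular.
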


\begin{defi}
 Let $J$ be a Jordan algebra. Two subalgebras $A_1$ and $A_2$ of $J$ are called
 strictly conjugate if there exists nilpotent derivations 
 $D_1,\ldots,D_k$ of $V$ such that 
 $$(\exp(D_1)\circ\ldots\circ\exp(D_k))(A_1)=A_2.$$
\end{defi}

For Jordan algebras, there is an analogue of the Levi-Malcev theorem for Lie 
algebras:
\begin{thm}[c.f. \cite{Penico}, and \cite{Jacobson}, Theorem 9.3]\label{thm: 
Levi decomp}
 Let $J$ be a Jordan algebra (over an algebraically closed field)
 with radical $R$. Then there exists a semi-simple 
subalgebra $S$ of $J$ such that $J=S\oplus R$ as vector spaces,
 $S\cap R={0}$, and $J/R\cong S$, i.e. $J$ is the semi-direct product of $S$ 
with $R$.
 
 If $S'$ is any semi-simple subalgebra of $J$, then $S'$ is strictly conjugate 
 to a subalgebra of $S$. 
 Moreover, the automorphism mapping $S'$ onto a subalgebra of $S$ can be chosen 
 to be of the form $\exp(D)$ for an inner nilpotent derivation $D$ of $J$ with 
 $$D=\sum_{i=1}^l [L(z_i),L(r_i)],\,z_i\in J,\,r_i\in R.$$
\end{thm}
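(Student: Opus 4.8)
The statement is a Jordan-algebra version of the Wedderburn principal theorem, and my plan is to establish the existence of $S$ and the conjugacy assertion simultaneously, by induction on $\dim R$. If $R=0$ take $S=J$. For the inductive step pick a minimal nonzero ideal $I$ of $J$ with $I\subseteq R$. Since $I$ consists of nilpotent elements it is not a simple algebra, and minimality then forces $I\circ I=0$: the ideal of $J$ generated by $I\circ I$ lies in $I$, hence is $0$ or $I$, and it cannot be $I$ because iterating would produce an infinite strictly descending chain of ideals, contradicting nilpotency of $I$ (the radical is nilpotent, see \cite{Albert},\cite{Jacobson}). If $R$ is itself minimal we are in the base case $R\circ R=0$ handled below. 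Otherwise $I\subsetneq R$: the radical of $J/I$ is $R/I$, of smaller dimension, so by induction $J/I$ has a semi-simple complement $\bar S$; its preimage $\hat J$ in $J$ has radical $I$ with $I\circ I=0$, so the base case applied to $\hat J$ gives a semi-simple complement $S$ of $I$ in $\hat J$, and then $S\cap R\subseteq S\cap\hat J\cap R=S\cap I=0$ together with $\dim S=\dim J-\dim R$ gives $J=S\oplus R$.

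It remains to treat the base case $R\circ R=0$. Then $R$ is a module over the semi-simple algebra $\bar J=J/R$ via $\bar x\cdot r=x\circ r$, and the extension $0\to R\to J\to\bar J\to 0$ is classified, up to splitting, by a symmetric $2$-cocycle with values in this module. As $\bar J$ is semi-simple---hence separable over the algebraically closed field of characteristic $0$---the Jordan analogue of the second Whitehead lemma (\cite{Jacobson}) gives the vanishing of the corresponding second cohomology group, so the cocycle is a coboundary and a linear section of $J\to\bar J$ can be corrected to a homomorphism; this is the Levi complement $S$, automatically semi-simple with $J/R\cong S$. For conjugacy in this base case, let $S_1,S_2$ be two complements and write each element of $S_2$ as $s+f(s)$ with $s\in S_1$, $f(s)\in R$; the subalgebra condition on $S_2$ and $R\circ R=0$ make $f\colon S_1\to R$ a derivation into the module, hence inner by vanishing of the first cohomology group (the Jordan first Whitehead lemma, of a piece with the extension Proposition recalled above). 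Thus $f$ is the restriction to $S_1$ of $D_0=\sum_i[L(a_i),L(r_i)]$ with $a_i\in J$, $r_i\in R$; since $R\circ R=0$ one has $D_0(R)=0$, whence $D_0^2=0$, so $\exp(D_0)=\id+D_0\in\Aut(J)$ and $\exp(D_0)(S_1)=S_2$.

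Two further points complete the argument. First, an arbitrary semi-simple subalgebra $S'\subseteq J$ reduces to the case of Levi complements: $\pi\colon J\to J/R$ is injective on $S'$, because $S'\cap R$ is a nil ideal of the semi-simple algebra $S'$ and hence $0$; putting $S''=(\pi|_S)^{-1}(\pi(S'))\subseteq S$ and $J'=\pi^{-1}(\pi(S'))$, both $S'$ and $S''$ are Levi complements of $R$ in the subalgebra $J'$, so the conjugacy just proved, applied inside $J'$, gives $D=\sum_i[L(z_i),L(r_i)]$ with $z_i\in J'\subseteq J$, $r_i\in R$, and $\exp(D)(S')=S''\subseteq S$---and the same formula defines a derivation of all of $J$. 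Second, one must track the shape and nilpotency of the conjugating derivations through both inductions: any $D=\sum_i[L(z_i),L(r_i)]$ with $r_i\in R$ maps $J$ into $R$ and strictly lowers the nilpotency filtration of $R$, hence is nilpotent, so $\exp(D)\in\Aut(J)$; and by the identity $[E,[L(w),L(s)]]=[L(Ew),L(s)]+[L(w),L(Es)]$ (Jacobi plus the derivation rule $[E,L(w)]=L(Ew)$) the bracket of two such derivations is again of this form, so they span a Lie subalgebra of $\Der(J)$, nilpotent by Engel's theorem; therefore any finite product of exponentials of such derivations is again a single $\exp(D)$ of the required form, which yields the last clause of the theorem.

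The principal obstacle is the cohomological input---the Jordan analogues of Whitehead's first and second lemmas, i.e.\ the vanishing of the first and second cohomology of a separable Jordan algebra with finite-dimensional coefficients. These are genuine theorems, resting on the structure and trace-form theory of semi-simple Jordan algebras, and I would invoke them from \cite{Jacobson} (they underlie Theorem 9.3 there, as well as the extension Proposition recalled above) rather than reprove them; the classical treatments are \cite{Penico} and \cite{Jacobson}. A secondary technical point is the reduction to $R\circ R=0$, which needs the structure theory of \cite{Albert}---in particular that the radical is nilpotent and that a minimal ideal contained in it has zero multiplication.
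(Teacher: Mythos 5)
The paper offers no proof of this statement at all: it is quoted from the literature (Penico, and Jacobson, Theorem~9.3), so there is no internal argument to compare against. Your reconstruction follows exactly the route of those references: reduce along a chain of ideals of the radical to the case $R\circ R=0$, split the resulting square-zero extension by the vanishing of the second Jordan cohomology of a separable algebra, obtain conjugacy of two complements from the first Whitehead lemma (every derivation $S_1\to R$ into the bimodule is of the form $\sum_i[L(a_i),L(r_i)]$ with $r_i\in R$, which is the content of the Proposition from Jacobson's Theorem~9.2 that the paper also records), and assemble the general conjugating automorphism as a product of exponentials of such derivations. The architecture is correct and the cohomological inputs you invoke are genuine theorems of Penico and Jacobson.

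One step, however, is justified by an argument that the paper itself explicitly flags as invalid. You assert that any $D=\sum_i[L(z_i),L(r_i)]$ with $r_i\in R$ ``strictly lowers the nilpotency filtration of $R$, hence is nilpotent''. Remark~\ref{rmk: nilpotent derivation} points out that the terms $R^{(k)}$ of the derived chain are subalgebras but in general not ideals of $J$, so one cannot expect $D(R^{(k)})\subseteq R^{(k+1)}$, and the nilpotence of $D$ does not follow from $R^{(n)}=0$ by such a filtration argument; the correct proof is Jacobson's Corollary~8.2/8.4 and requires more care. Since both $\exp(D)\in\Aut(J)$ and the applicability of Engel's theorem to your Lie algebra of such derivations rest on this elementwise nilpotency, you should replace the one-line filtration claim by a citation of that corollary or by its actual argument. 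A smaller looseness of the same flavour: in showing that a minimal ideal $I\subseteq R$ satisfies $I\circ I=0$, if the ideal generated by $I\circ I$ equalled $I$ then iterating would give a \emph{constant} chain, not a strictly descending one; the genuine contradiction is with the termination of Penico's chain of ideals attached to the solvable ideal $I$, which is again part of the structure theory you import from Albert and Penico. Neither issue changes the overall strategy, which is the standard one underlying the paper's citation.
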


\begin{rmk}\label{rmk: nilpotent derivation}
 If $J=S+ R$ is a Jordan algebra with radical $R$ and $S\subseteq J$ 
 semi-simple, then any derivation of the form 
  $D=\sum_{i=1}^l [L(z_i),L(r_i)],\,z_i\in J,\,r_i\in R$
  is nilpotent.
  
  For any ideal $A\subseteq J$ define  $A^{(1)}=A$, 
  $A^{(k)}=A^{(k-1)}A^{(k-1)}=\{\sum_ia_ib_i|\,a_i,b_i\in A^{(k-1)}\}$.
  Since $R$ is the radical of $J$, there is $n\in \mathbb N$ with 
  $R^{(n)}=\{0\}$.
  The chain $A=A^{(1)}\supseteq A^{(2)}\supseteq \ldots$ is a chain of 
  subalgebras of $J$, but $A^{(k)}$ is in general not
  an ideal in $J$ due to the non-associativity of $J$.
  Therefore, one cannot expect to have $D(R^{(k)})\subseteq R^{(k+1)}$ in 
  general, and the nilpotence of $D$ does not directly follow from the 
  existence of $n\in \mathbb N$ with $R^{(n)}=\{0\}$.
  
  For a proof of the nilpotence of $D$ see e.g. Corollary~8.4 in
  \cite{Jacobson}.
  
\end{rmk}

\section{Automorphism and structure group}

\begin{defi}
 Define the automorphism group of a Jordan algebra $J$ to be
 $\Aut(J)=\{g\in \mathrm{GL}(J)|\,g(x\circ y)=(gx)\circ(gy) \text{ for 
all } x,y\in J\}$.
\end{defi}

The automorphism group of any Jordan algebra $J$ is an algebraic subgroup of 
$\GL(J)$.

\begin{rmk}
 The Lie algebra of $\Aut(J)$ is the Lie algebra $\Der(J)$ of derivations.
 
 Moreover, in the case of a semi-simple Jordan algebra
 all derivations are inner.
\end{rmk}

\begin{defi}
 Let $J$ be a semi-simple Jordan algebra with trace form 
 $\tau$.
 For any invertible linear transformation $g\in \GL(J)$, let $^t g$ denote
 the transpose of $g$ with respect to $\tau$.
 Define the structure group $\mathrm{Str}(J)$ of $J$ to be 
 $\mathrm{Str}(J)=\{g\in \GL(J)|\,P(gx)=gP(x)\,^t g \text{ for any }x\in J\}$,
 and let $\mathfrak{str}(J)$ denote its Lie algebra.
\end{defi}

\begin{rmk}
 If $J$ is semi-simple, then $\Aut(J)$ and $\mathrm{Str}(J)$ are reductive 
 subgroups of $\GL(J)$, see e.g. \cite{Jacobson_book}, Chapter VIII, Theorem 3.
\end{rmk}

\begin{rmk}
 For the Lie algebra $\mathfrak{str}(J)$ of the structure group $J$ we 
have
$$\mathfrak{str}(J)=\Der(J)\oplus J$$
since every element $X\in \mathfrak{str}(J)$ can be written uniquely as 
$X=D+L(a)$, where $D\in \Der(J)$ and $a\in J$ (see e.g. \cite{FarautKoranyi},
Proposition VIII.2.6). 
The Lie algebra structure on $\Der(J)\oplus J$ is thus given 
by $$[D_1+a_1,D_2+a_2]=([D_1,D_2]+[L(a_1),L(a_2)])+(D_1(a_2)-D_2(a_1))$$
for $D_1,D_2\in \Der(J)$ and $a_1,a_2\in J$.

Note that $\Der(A)\oplus A$ is a Lie algebra with this definition of 
the bracket for any Jordan algebra $A$; c.f. Exercise 2, Chapter I in
\cite{Satake}.
The homomorphism of Lie algebras
$\Der(A)\oplus A\rightarrow \mathfrak{gl}(A)$, $D+a\mapsto D+L(a)$,
is injective for any Jordan algebra $A$ with an identity element.
\end{rmk}

\begin{rmk}
 If $J$ is a complex semi-simple Jordan algebra, then it is the complexification
 of a real Euclidean Jordan algebra $J_\mathbb R$ 
 (c.f. Theorem VIII.5.2 in \cite{FarautKoranyi}),
 where a real Jordan algebra is called Euclidean (or compact) if its trace 
 form $\tau$ is positive-definite. 
 The automorphism group of the Euclidean Jordan algebra $J_\R$ is
 a closed subgroup of the orthogonal group $O(J_\R, \tau)$ and thus compact.
 The connected component of $\Aut(J)$ is the complexification of the
 connected component of $\Aut(J_\mathbb R)$, which shows again that
 $\Aut(J)$ is a complex reductive Lie group.
 \end{rmk}

\begin{prop}[\cite{FarautKoranyi}, Proposition VIII.2.3]
The automorphism group $\Aut(J)$ is a subgroup of the structure group 
$\mathrm{Str}(J)$. Furthermore, if $e$ denotes the identity element of $J$,
then $\Aut(J)$ is the isotropy of $\mathrm{Str}(J)$ in $e$.
\end{prop}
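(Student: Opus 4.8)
The plan is to prove the two assertions in turn, the second building on the first.

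First I would establish $\Aut(J)\subseteq\mathrm{Str}(J)$. For $g\in\Aut(J)$ the defining identity $g(x\circ y)=(gx)\circ(gy)$ is equivalent to the conjugation formula $gL(x)g^{-1}=L(gx)$ for all $x\in J$ (evaluate both sides at an arbitrary $y$ and use that $g$ is invertible). Since $g$ is an automorphism one also has $g(x^2)=(gx)^2$, and therefore
$$gP(x)g^{-1}=2\bigl(gL(x)g^{-1}\bigr)^2-gL(x^2)g^{-1}=2L(gx)^2-L\bigl((gx)^2\bigr)=P(gx).$$
To upgrade $gP(x)g^{-1}=P(gx)$ to the defining relation $P(gx)=gP(x)\,{}^t g$ of $\mathrm{Str}(J)$ it suffices to check that $\,{}^t g=g^{-1}$, i.e.\ that $g$ is orthogonal for the trace form; this follows from
$$\tau(gx,gy)=\Tr L\bigl(g(x\circ y)\bigr)=\Tr\bigl(gL(x\circ y)g^{-1}\bigr)=\Tr L(x\circ y)=\tau(x,y)$$
together with the non-degeneracy of $\tau$, which holds because $J$ is semi-simple. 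The conjugation formula also yields $ge=e$, since for every $y\in J$ one has $gy=g(e\circ y)=(ge)\circ(gy)$ and $g$ is surjective, so $ge$ acts as the identity. This already gives the inclusion $\Aut(J)\subseteq\{g\in\mathrm{Str}(J)\mid ge=e\}$.

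For the reverse inclusion I would take $g\in\mathrm{Str}(J)$ with $ge=e$ and show $g\in\Aut(J)$. Setting $x=e$ in $P(gx)=gP(x)\,{}^t g$ and using $P(e)=2L(e)^2-L(e^2)=\id$ gives $\id=P(e)=P(ge)=g\,{}^t g$, so $\,{}^t g=g^{-1}$ and hence $P(gy)=gP(y)g^{-1}$ for all $y\in J$. The key point is then the polarization identity $2L(x)=P(x+e)-P(x)-P(e)$, which is a direct computation from $L(e)=\id$ and $(x+e)^2=x^2+2x+e$. Conjugating this identity by $g$, using $P(gy)=gP(y)g^{-1}$ and $g(x+e)=gx+e$, I obtain
$$2\,gL(x)g^{-1}=P(gx+e)-P(gx)-P(e)=2L(gx),$$
whence $gL(x)g^{-1}=L(gx)$ for all $x$; reading the conjugation formula backwards (replace $y$ by $gy$) this says $g(x\circ y)=(gx)\circ(gy)$, so $g\in\Aut(J)$.

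The argument is essentially formal, so I do not expect a serious obstacle. The only points that require care are the two places where the hypotheses genuinely enter: the passage between ``$g$ preserves $\tau$'' and ``$\,{}^t g=g^{-1}$'', which uses non-degeneracy of $\tau$ (semi-simplicity of $J$), and the polarization identity $P(x+e)-P(x)-P(e)=2L(x)$, which uses the existence of the identity element $e$. Everything else is bookkeeping with the interplay between $L$ and $P$.
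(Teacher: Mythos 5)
Your proof is correct and complete: both inclusions are verified without gaps, and the two places where hypotheses enter (non-degeneracy of $\tau$ for ${}^t g=g^{-1}$, and the polarization identity $2L(x)=P(x+e)-P(x)-P(e)$) are exactly the right pivot points. The paper itself gives no proof but cites Faraut--Kor\'anyi, Proposition VIII.2.3, and your argument is essentially the standard one found there.
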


\section{One-parameter subgroups}
In the following, let $\mathbb K$ be algebraically closed,
and let $\mathbb K^\times$ denote the set of non-zero elements in $\mathbb K$.

The following result on the existence of one-parameter subgroups
will be frequently used in the next sections 
when studying closed orbits and orbits closures of the automorphism group 
of a Jordan algebra in the $n$-fold product of the Jordan algebra.

\begin{thm}[see \cite{Birkes}, Theorem 4.2]\label{thm: ex of one-param}
 Let $G$ be a reductive algebraic group and $\rho:G\rightarrow \GL(V)$
 a representation of $G$.
 Let $x\in V$ and $Y$ be the unique closed orbit in the closure of $G\cdot x$.
 Then there exists a one-parameter subgroup 
 $\lambda:\K^\times\rightarrow G$
 such that $\lim_{t\to 0}\lambda(t)\cdot x$ exists and 
 $\lim_{t\to 0}\lambda(t)\cdot x=y\in Y$.
\end{thm}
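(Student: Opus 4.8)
The plan is to derive the statement from the general Hilbert--Mumford criterion in the following form: \emph{if $Z\subseteq V$ is a closed $G$-stable subset with $\overline{G\cdot x}\cap Z\neq\emptyset$, then there is a one-parameter subgroup $\lambda:\K^\times\to G$ for which $\lim_{t\to0}\lambda(t)\cdot x$ exists and lies in $Z$.} Granting this, I would apply it with $Z=Y$: since $Y$ is a closed orbit inside $\overline{G\cdot x}$, it is $G$-stable and $\overline{G\cdot x}\cap Y=Y\neq\emptyset$, so the limit point produced by the criterion is exactly the desired $y\in Y$. Thus the whole problem reduces to proving this criterion.

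To prove the criterion I would argue by way of a discrete valuation ring. Fix $y\in\overline{G\cdot x}\cap Z$. Since $y$ lies in the closure of the image of the orbit morphism $G\to V$, $g\mapsto g\cdot x$, and $\K$ is algebraically closed, one can produce a discrete valuation ring $R$ with uniformizer $\pi$, residue field $\K$ and fraction field $K$, together with $g\in G(K)$, such that the $K$-point $g\cdot x$ of $V$ extends to an $R$-point of $V$ whose reduction modulo $\pi$ equals $y$ (concretely: take a curve in $\overline{G\cdot x}$ joining a general point of $G\cdot x$ to $y$, normalize it, lift the orbit map near the general point using smoothness of $G\to G\cdot x$, and let $R$ be the completed local ring at a point lying over $y$). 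The key structural input is then the Cartan--Iwahori decomposition of $G(K)$: because $\K$ is algebraically closed, $G$ is split, and fixing a maximal torus $T\subseteq G$ one has $G(K)=\bigcup_{\nu\in X_*(T)}G(R)\,\nu(\pi)\,G(R)$. I would write $g=k_1\,\lambda(\pi)\,k_2$ with $k_1,k_2\in G(R)$ and $\lambda\in X_*(T)$, let $\bar k_1,\bar k_2\in G(\K)$ be the reductions modulo $\pi$, and set $x':=k_2\cdot x\in V(R)$; then $\lambda(\pi)\cdot x'=k_1^{-1}\cdot(g\cdot x)$ again extends to an $R$-point of $V$ whose reduction is $y':=\bar k_1^{-1}\cdot y$, still a point of $Z$ since $Z$ is $G$-stable.

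The remaining step is weight-space bookkeeping. Writing $V=\bigoplus_{m\in\Z}V_m$ for the $\lambda$-weight decomposition and $x'=\sum_m x'_m$ with $x'_m\in V_m\otimes_{\K}R$, the condition $\lambda(\pi)\cdot x'=\sum_m\pi^m x'_m\in V(R)$ forces $x'_m\in\pi^{-m}(V_m\otimes R)$ for all $m$ (a genuine constraint only for $m<0$); in particular $\bar x'_m=0$ for $m<0$, so $\bar x':=\bar k_2\cdot x$ lies in $\bigoplus_{m\geq0}V_m$ and $\lim_{t\to0}\lambda(t)\cdot\bar x'$ exists with value $\bar x'_0$. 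Reducing $\sum_m\pi^m x'_m$ modulo $\pi$ instead kills all terms with $m>0$, so $y'$ has no positive $\lambda$-weight components and its $V_0$-component is again $\bar x'_0$; hence $\lim_{t\to\infty}\lambda(t)\cdot y'$ exists and equals $\bar x'_0$. Since $\lambda(t)\cdot y'\in Z$ for every $t\in\K^\times$ and $Z$ is closed, it follows that $\bar x'_0\in Z$. Therefore $\lim_{t\to0}\lambda(t)\cdot\bar x'=\bar x'_0\in Z$, and conjugating by $\bar k_2^{-1}\in G(\K)$ shows that $\mu(t):=\bar k_2^{-1}\lambda(t)\bar k_2$ is a one-parameter subgroup with $\lim_{t\to0}\mu(t)\cdot x=\bar k_2^{-1}\cdot\bar x'_0\in Z$. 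Specializing to $Z=Y$ then gives the theorem.

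The main obstacle is the Cartan--Iwahori decomposition of $G(K)$ with cocharacters rational over the ground field: this is the one genuinely non-formal ingredient (it rests on Iwahori's theorem, equivalently on the structure of the affine Bruhat--Tits building), and it is precisely where algebraic closedness of $\K$ enters in an essential way, guaranteeing that $G$ is split so that $T$ and $\lambda$ are defined over $\K$ and not merely over $K$; everything after that is routine. An alternative route that bypasses valuation theory is Kempf's method of optimal one-parameter subgroups: fix a $G$-invariant norm on the set of one-parameter subgroups of $G$, consider the functional measuring how efficiently a one-parameter subgroup drives $x$ toward $Z$, prove that its supremum is attained by an essentially unique optimal $\lambda$, and verify directly that $\lim_{t\to0}\lambda(t)\cdot x\in Z$; this trades the Iwahori decomposition for semicontinuity and attainment arguments, and moreover yields a canonical destabilizing subgroup, a refinement not needed here.
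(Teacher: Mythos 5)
The paper does not prove this theorem at all: it is imported verbatim from Birkes (\emph{Orbits of linear algebraic groups}, Ann.\ of Math.\ 93 (1971), Theorem 4.2), so there is no in-paper argument to compare against. Your sketch is a correct proof along a genuinely different route from Birkes' original one. What you outline is the valuation-theoretic Hilbert--Mumford argument of Mumford (GIT, Ch.\ 2) in the strengthened form due to Kempf, applied to the closed $G$-stable set $Z=Y$: produce $g\in G(K)$ over a DVR $R\subset K$ with $g\cdot x$ extending over $R$ and specializing into $Z$, factor $g=k_1\lambda(\pi)k_2$ via the Iwahori--Cartan decomposition of $G(K)$, and read off from the $\lambda$-weight bookkeeping that $\lim_{t\to0}\lambda(t)\cdot(\bar k_2\cdot x)$ exists and lands in $Z$; the conjugated cocharacter $\bar k_2^{-1}\lambda\bar k_2$ then does the job for $x$. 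The weight computation is right (the constraint $x'_m\in\pi^{-m}(V_m\otimes R)$ for $m<0$, and the observation that $y'$ has only nonpositive weights so that $\lim_{t\to\infty}\lambda(t)\cdot y'=\bar x'_0\in Z$ by closedness and $G$-stability of $Z$). Birkes instead avoids the affine building entirely: he uses $\overline{G\cdot x}=G\cdot\overline{B\cdot x}$ for a Borel subgroup $B$ (completeness of $G/B$) to reduce to solvable groups, then to tori, where the one-parameter subgroup is exhibited directly from the character monoid; this is more elementary and is what makes his rationality statements over non-closed perfect fields work, whereas your route buys the stronger Kempf-style optimality machinery. Two small points you should make explicit in a full write-up: the reduction to connected $G$ (needed both for irreducibility of $\overline{G^\circ\cdot x}$ when choosing the curve and for the Iwahori decomposition), and the finite separable extension of $K$ required to lift a $K$-point of the orbit to a $K$-point of $G$ along the smooth orbit map, after which $R$ must be replaced by a DVR of the extension -- both are routine in characteristic $0$ over an algebraically closed field.
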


 Let $J$ be a semi-simple Jordan algebra and let 
 $\lambda:\mathbb K^\times\rightarrow \Aut(J)$ be a one-parameter 
 subgroup.
 As the irreducible representations of $\mathbb K^\times$ are given by
 $t\mapsto t^k$, $k\in \mathbb Z$, we have 
 the decomposition 
 $$ J=\bigoplus_{k\in \mathbb Z} J_k(\lambda),$$
 where $J_k(\lambda)=\{z\in J\,|\,\lambda(t)\cdot z=t^kz 
 \text{ for all } t\in \mathbb K^\times\}$. 
 This defines a grading of the Jordan algebra $J$ in the sense that
 $J_k(\lambda)\circ J_l(\lambda)\subseteq J_{k+l}(\lambda)$.
 We set 
 \begin{align*}
  &J_{\geq 0}(\lambda)=\bigoplus_{k\geq 0} J_k(\lambda)
  =\{z\in J\, |\,\lim_{t\to 0}\lambda(t)\cdot z \text{ exits}\},\\
  &J_0(\lambda)=\{z\in J\,|\, \lambda(t)\cdot z=z\text{ for all } t\in 
  \mathbb K^\times \},\text{ and }\\
  &J_{>0}(\lambda)=\bigoplus_{k>0}J_k(\lambda)=\{z\in J_{\geq 0}(\lambda)\,|\,
  \lim_{t\to 0}\lambda(t)\cdot z =0\}.
\end{align*}

\begin{rmk}
 Note that the subspaces $J_{\geq 0}(\lambda)$, $J_{>0}(\lambda)$, and 
 $J_0(\lambda)$ are subalgebras of $J$. 
 
 The subalgebra $J_0(\lambda)$ is semi-simple since the trace form $\tau_J$ of
 $J$ restricted to $J_0(\lambda)$ is a non-degenerate associative symmetric
 bilinear form and the identity $e$ is contained in $J_0(\lambda)$.
 
 Furthermore, $J_{>0}(\lambda)$ is the radical of $J_{\geq 0}(\lambda)$
 and $J_{\geq 0}(\lambda)$ is the semi-direct product of 
 $J_0(\lambda)$ and $J_{>0}(\lambda)$.
\end{rmk}

\begin{defi}\label{defi: limit subspaces}
 For any one-parameter subgroup $\lambda:\mathbb K^\times\rightarrow \Aut(J)$, 
 define moreover
 the following subspaces of 
 $\mathfrak h=\mathfrak{str}(J)=\Der(J)\oplus J$:
 
 \begin{align*}
  &\mathfrak p(\lambda)=\mathfrak h_{\geq 0}(\lambda)=\{X\in\mathfrak h\,|\,
     \lim_{t\to 0}\lambda(t)\cdot X\text{ exits}\}\\
  &\mathfrak h^\lambda=\mathfrak h_0(\lambda)=\{X\in \mathfrak h\,|\, 
     \lambda(t)\cdot X=X\text{ for all } t\in \mathbb K^\times\}\\
  &\mathfrak u(\lambda)=\mathfrak h_{>0}(\lambda)=\{X\in \mathfrak p(\lambda)\,
  |\,\lim_{t\to 0}\lambda(t)\cdot X=0\}
 \end{align*}
\end{defi}

\section{Closed orbits of the automorphism group}
In the following, let $J$ denote a semi-simple Jordan algebra over $\mathbb K$,
where $\mathbb K$ is algebraically closed.

The automorphism group $\Aut(J)$ acts diagonally on the $n$-fold product 
$J\times\ldots\times J$ of $J$:
$$\Aut(J)\times (J\times\ldots\times J)\rightarrow J,
\,g(x_1,\ldots,x_n)=(gx_1,\ldots,gx_n).$$
The goal is now to characterize the closed orbits in $J\times\ldots\times J$ 
under this action. 
An $\Aut(J)$-orbit through an $n$-tuple $(x_1,\ldots,x_n)$ is closed if and 
only if the orbit of the connected group $G=\Aut(J)^\circ$ through 
$(x_1,\ldots,x_n)$ is closed since $\Aut(J)$ is an algebraic 
subgroup of $\mathrm{GL}(J)$ and hence has finitely many connected 
components.

Let $H$ denote the connected component of the structure
group $\mathrm{Str}(J)$ of $J$, and $\mathfrak h=\mathfrak{str}(J)$
its Lie algebra.

\begin{defi}
 For any $n$-tuple $x=(x_1,\ldots,x_n)$ in $J\times\ldots\times J$, 
 define $A(x)$ to be the subalgebra of $J$ generated by
 $x_1,\ldots,x_n$.
 Moreover, let $\mathfrak l(x)$ be the Lie subalgebra
  \begin{equation*}
  [A(x),A(x)]\oplus A(x)
  =\left\{\left.\sum_{i=1}^k[L(y_i),L(y_i')]+z\right|
  \,y_i,y_i',z\in A(x)\right\}
  \subseteq \Der(J)\oplus J=\mathfrak h.
 \end{equation*}
\end{defi}

\begin{ex}
 Let $J=\Sym_2(\C)$.
 We have $G=\Aut(J)^\circ=\SO_2(\C)$ and $H=\GL_2(\C)$ where $G$ and $H$ act via
 $g\cdot X=gX\,^tg$ for $X\in \Sym_2(\C)$.
\begin{enumerate}[(i)]
 \item  Let $n=2$, and $x=\left(\left(\begin{smallmatrix}
           1&i\\ i&-1 
          \end{smallmatrix}\right),
          \left(\begin{smallmatrix}
           1&-i\\-i&-1
          \end{smallmatrix}\right)\right)$.
          Then $A(x)=\Sym_2(\mathbb C)$, and 
          $\mathfrak l (x)=\mathfrak{gl}_2(\mathbb C)$.
          Remark that $A(x)$ is a simple Jordan algebra even though 
          both $\left(\begin{smallmatrix}
           1&i\\ i&-1 
          \end{smallmatrix}\right)$ and 
          $\left(\begin{smallmatrix}
           1&-i\\-i&-1
          \end{smallmatrix}\right)$
          are nilpotent elements of $\Sym_2(\mathbb C)$.
          
 \item  Let $n=2$, and
 $x=\left(\left(\begin{smallmatrix}
           1&0\\ 0&1 
          \end{smallmatrix}\right),
          \left(\begin{smallmatrix}
           1&i\\i&-1
          \end{smallmatrix}\right)\right)$. 
          Then $A(x)=\C\left(\begin{smallmatrix}
                     1&0\\ 0&1 
                    \end{smallmatrix}\right)
                \oplus \C
                \left(\begin{smallmatrix}
           1&i\\i&-1
          \end{smallmatrix}\right)$
 and since $\left[L\left(\begin{smallmatrix}
           1&0\\ 0&1 
          \end{smallmatrix}\right),
        L\left(\begin{smallmatrix}
           1&i\\i&-1
          \end{smallmatrix}\right)\right]=0$
 we also get $\mathfrak l(x)= \C\left(\begin{smallmatrix}
                     1&0\\ 0&1 
                    \end{smallmatrix}\right)
                \oplus \C
                \left(\begin{smallmatrix}
           1&i\\i&-1
          \end{smallmatrix}\right)$.
\end{enumerate}
\end{ex}

\begin{defi}
 Let $G$ be a linear algebraic group with Lie algebra $\g$.
 An algebraic Lie subalgebra $\mathfrak c$ of $\g$ is a Lie subalgebra such
 that there exists an algebraic subgroup $C$ of $G$ with Lie algebra 
 $\mathfrak c$.
\end{defi}

\begin{defi}
 Let $G$ be a reductive linear algebraic group, and $\g$ its Lie algebra.
 A Lie subalgebra $\mathfrak c$ of $\mathfrak g$ is called reductive in 
 $\mathfrak g$ if the adjoint representation of $\g$ restricted to $\mathfrak c$
 is completely reducible.
\end{defi}

\begin{rmk}
 A Lie subalgebra $\mathfrak c\subset \mathfrak g$ is reductive in
 $\mathfrak g$ if and only if its algebraic hull, the smallest algebraic
 Lie subalgebra of $\mathfrak g$ containing $\mathfrak c$, is reductive in 
 $\mathfrak g$; c.f. \cite{Hochschild_AlgGroups}, $\S$ VIII.3.
\end{rmk}

Define an involution $\Theta$ on $H$ by setting 
$$\Theta: H\rightarrow H,\,\Theta(h)=\,^th^{-1},$$ 
where the transpose $^th$ of the element $h$ is taken with respect to the 
trace form $\tau=\tau_J$ of $J$.
This map $\Theta$ is a group automorphism of $H$.
The differential of $\Theta:H\rightarrow H$ is given by 
$$\theta:\h\rightarrow \h,\,\theta(X)=-^tX.$$
Its fixed point set is exactly $\Der(J)$, which is the Lie algebra 
$\g$ of the automorphism group $\Aut(J)$, and moreover we have 
$\theta|_{J}=-\id_{J}$.
Therefore, the connected component of the fixed point group 
$H^\Theta$ is precisely the connected component $G$ of the automorphism group 
$\Aut(J)$.

 Let $A$ be any subalgebra of $J$ and $A=S+R$ be a Levi decomposition of $A$, 
 where $S$ is a semi-simple 
 subalgebra of $A$ and $R$ is the radical 
 (c.f. Theorem \ref{thm: Levi decomp}).
 Next, the structure of  
 $[A,A]\oplus A=\left\{ \left.\sum_{i=1}^k[L(y_i),L(y_i')]
 +L(z)\right| \,y_i,y_i',z\in A\right\}$ as a subset of 
 $\h=\mathfrak{str}(J)\subset\mathfrak{gl}(J)$ is examined.
 Note that $[A,A]\oplus A$ is $\theta$-invariant.
 
 The centre $Z_S$ of $S$ is defined to be the subset
 of $S$ 
 consisting of the elements which operator-commute
 with all other elements of $S$ in $S$, 
 i.e. $s\in Z_S$ if and only if the the derivation 
 $[L(s),L(t)]$ vanishes (on $S$) for every $t\in S$.
 
\begin{rmk}
 Let $S=\bigoplus_i S_i$ a decomposition of $S$ into simple subalgebras $S_i$
 (c.f. Theorem~11 in \cite{Albert}).
 By Satz 5.1, Kapitel I, of \cite{BraunKoecher} the centre $Z_{S_i}$ of each 
 simple subalgebra $S_i$ is $\mathbb K e_i$, where $e_i$ is the identity element 
 of $S_i$.
 Therefore, the centre of $S$ is 
 $Z_S=\bigoplus_i Z_{S_i}=\bigoplus_i \mathbb K e_i$.
 
 The structure Lie algebra 
 $\mathfrak{str}(S)\cong\Der(S)\oplus S$ of $S$ is reductive in 
 $\mathfrak{gl}(S)$, all derivations $X\in \Der(S)$ are inner, and its centre 
 is $Z_S$.
 
 Let $S'$ be the subspace of $S$ defined by 
 $S'=\Der(S)(S)=\{\sum_i [L(s_i),L(t_i)](s)|\,s_i,t_i,s\in S\}$. 
 Since $\Der(S)\oplus S$ is a reductive Lie algebra, its derived algebra 
 \begin{align*}
  \left[\Der(S)\oplus S,\Der(S)\oplus S\right]
  = \Der(S)\oplus \left(\Der(S)(S)\right)=\Der(S)\oplus S'
 \end{align*}
 is semi-simple and 
$  \Der(S)\oplus S=\left(\Der(S)\oplus S'\right)\oplus Z_S$,
 which implies $S= S'\oplus Z_S$,
 and $\Der(S)=[S'\oplus Z_S,S'\oplus Z_S]=[S',S']
 =\{\sum_i[L(r_i),L(s_i)]|\,r_i,s_i\in S'\}$.
 Furthermore, we have $S'=(Z_S)^\perp=\{z\in J\,|\,\tau_J(z,w)=0\text{ for all } 
 w\in Z_S\}$.
\end{rmk}

\begin{ex}
 Let $S=\Sym_2 (\mathbb C)$. 
 Then $[\Sym_2(\mathbb C),\Sym_2(\mathbb C)]\oplus \Sym_2(\mathbb C)
 \cong\mathfrak{gl}_2(\mathbb C)$ and 
 $[S,S]\cong \mathfrak{so}_2(\mathbb C)$.
 The centre of $S=\Sym_2(\mathbb C)$ is $Z_S=\mathbb C 
 \left(\begin{smallmatrix}
  1&0\\0&1
 \end{smallmatrix}\right)$,
 \begin{align*}
  &S'=\{x\in \Sym_2 (\mathbb C)|\, \Tr(x)=0\}
  =\Sym_2(\mathbb C)\cap \mathfrak{sl}_2(\mathbb C),
 \end{align*}
 and $[S',S']\oplus S'\cong \mathfrak{sl}_2(\mathbb C).$
 In particular, $S'\subset S$ is not a subalgebra of the Jordan algebra $S$. 
\end{ex}

\begin{lemma}
 The Lie subalgebra $[S,S]\oplus S\subset\mathfrak{gl}(J)$ is reductive in 
 $\mathfrak{gl}(J)$. 
\end{lemma}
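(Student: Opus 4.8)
The plan is to deduce the lemma from two properties of $\mathfrak m:=[S,S]\oplus S$, viewed as a Lie subalgebra of $\h=\Der(J)\oplus J\subseteq\mathfrak{gl}(J)$: that $\mathfrak m$ is a reductive Lie algebra, and that every element of the centre $Z(\mathfrak m)$ acts on $J$ by a semisimple endomorphism. These suffice: for such $\mathfrak m$ the module $J$ is completely reducible (Weyl's theorem applied to the semisimple ideal $[\mathfrak m,\mathfrak m]$, together with simultaneous diagonalisation of the commuting semisimple operators in $Z(\mathfrak m)$, whose joint eigenspaces are $[\mathfrak m,\mathfrak m]$-stable), hence $J\otimes J^{*}$ is completely reducible over $\mathfrak m$, i.e.\ $\mathfrak{gl}(J)$ is completely reducible under the adjoint action of $\mathfrak m$; thus $\mathfrak m$ is reductive in $\mathfrak{gl}(J)$ in the sense used here. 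So I would first record this standard reduction, and then establish the two properties.

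The key step is to construct a homomorphism of Lie algebras $\varphi\colon\mathfrak{str}(S)=\Der(S)\oplus S\to\mathfrak{gl}(J)$ sending $a\in S$ to $L(a)$ and an inner derivation $\sum_i[L(s_i),L(t_i)]$ of $S$ to the operator $\sum_i[L(s_i),L(t_i)]$ on $J$ (left multiplications in $S$ on the source side, in $J$ on the target side); recall that all derivations of the semi-simple algebra $S$ are inner, so $\Der(S)$ is spanned by such sums. Once $\varphi$ is known to be well defined, the rest is formal: $\varphi$ is injective, since $\sum_i[L(s_i),L(t_i)]+L(a)=0$ in $\mathfrak{gl}(J)$ forces $L(a)=0$ (as $\Der(J)\cap L(J)=0$ inside $\h$), hence $a=0$ (as $L$ is injective on the unital algebra $J$), and then $\sum_i[L(s_i),L(t_i)]$ already vanishes on $S$. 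Its image is precisely $\mathfrak m$, so $\varphi$ identifies $\mathfrak m$ with $\mathfrak{str}(S)$, which is reductive by the Remark preceding the lemma; this gives the first property. Under this identification $Z(\mathfrak m)$ corresponds to the centre $Z_S=\bigoplus_i\K e_i$ of $\mathfrak{str}(S)$, so $Z(\mathfrak m)=\bigoplus_i\K L(e_i)$; each $e_i$ is an idempotent of $J$, hence $L(e_i)$ is diagonalisable with eigenvalues in $\{0,\tfrac12,1\}$ by the Peirce decomposition, and every element of $Z(\mathfrak m)$ is therefore semisimple, which gives the second property.

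The hard part — essentially the only genuine point — will be the well-definedness of $\varphi$ on $\Der(S)$: one must show that a relation $\sum_i[L(s_i),L(t_i)]=0$ among the left multiplications of $S$ acting on $S$ forces the operator $D:=\sum_i[L(s_i),L(t_i)]$ built from the left multiplications of $J$ to vanish on all of $J$ (evaluating a general relation at the identity of $J$ absorbs any $L(a)$-terms into this case). Here I would use that, $S$ being semi-simple, the Jordan $S$-bimodule $J$ (action $s\cdot x=L(s)x$) is completely reducible and contains $S$ as a sub-bimodule, so $J=S\oplus M$ as $S$-bimodules; $D$ preserves this splitting, restricts to $0$ on $S$ by hypothesis, and on $M$ equals $\sum_i[L_M(s_i),L_M(t_i)]$. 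Thus everything reduces to the assertion that a multiplication relation valid on the regular bimodule $S$ remains valid on every Jordan $S$-bimodule — equivalently, that the inner derivation $\sum_i[L(s_i),L(t_i)]$ is controlled by the universal multiplication envelope of $S$, which for semi-simple $S$ is a semi-simple associative algebra whose modules are exactly the $S$-bimodules. Carrying this last verification through, via the representation theory of semi-simple Jordan algebras (cf.\ \cite{Jacobson}), is what the proof really rests on; the remaining steps are routine bookkeeping with the $\theta$-stable decomposition $\mathfrak m=[S,S]\oplus L(S)$ already set up in the text.
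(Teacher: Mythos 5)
Your proposal follows essentially the same route as the paper: the same injective homomorphism $\varphi\colon\Der(S)\oplus S\to\mathfrak{gl}(J)$ with image $[S,S]\oplus S$, reductivity of $\mathfrak{str}(S)$, and the Peirce eigenvalues $1,\tfrac12,0$ of the $L(e_i)$ to see that the centre acts semi-simply; the paper likewise delegates the well-definedness of $\varphi$ to Jacobson (Lemma~8.3), and your explicit reduction of ``reductive in $\mathfrak{gl}(J)$'' to ``abstractly reductive with semi-simple central elements'' is made in the paper's proof and in the remark following it.

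One caveat on the step you rightly single out as the crux. The assertion you propose to reduce everything to --- that a multiplication relation valid on the regular bimodule $S$ remains valid on every Jordan $S$-bimodule --- is false as stated: for $S=\K e$ with $e$ an idempotent of $J$ other than the identity, the relation $L(e)-\id=0$ holds on $S$ but $L(e)-\id$ has eigenvalues $-\tfrac12$ and $-1$ on the other Peirce spaces of $J$. Only the commutator relations transfer, and for a reason specific to them: if $D=\sum_i[L(s_i),L(t_i)]$ vanishes on $S$, then on any $S$-bimodule $M$ the operator $\tilde D=\sum_i[\rho(s_i),\rho(t_i)]$ satisfies $[\tilde D,\rho(c)]=\rho(Dc)=0$ for all $c\in S$ (a computation in the split null extension $S\oplus M$), so $\tilde D$ lies in the commutant of the multiplication envelope of $S$ in $M$; on an irreducible $M$ it is therefore a scalar by Schur's lemma, and being a sum of commutators it is traceless, hence zero. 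Complete reducibility of $J$ as an $S$-bimodule (Jacobson) then gives $\tilde D=0$ on $J$. With the reduction stated in this sharper form your argument closes; as written, the intermediate assertion is one a reader could not prove.
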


\begin{proof}
 The structure Lie algebra
 $\mathfrak{str}(S)\cong \Der(S)\oplus S$ of $S\subseteq J$ is reductive.
 The inclusion map $S\hookrightarrow J$ extends to an injective homomorphism of
 Lie algebras 
 $\varphi:\mathfrak{str}(S)\hookrightarrow
 \mathfrak{str}(J)\subset \mathfrak{gl}(J)$, 
 where an element $X+s\in \Der(S)\oplus S$,
 $X=\sum_i[L(r_i),L(s_i)]$, $r_i,s_i\in S$, is mapped to 
 $\sum_i[L(r_i),L(s_i)]+L(s)$, now considered as a transformation of the 
 Jordan algebra $J$ (c.f. \cite{Jacobson}, Lemma 8.3).
 The image of $\varphi$ is $[S,S]\oplus S$.
 
 Using the preceding remark, it follows that $[S',S']\oplus S'$ is a
 semi-simple Lie algebra. It thus remains to show 
 that the centre $Z$ of $[S,S]\oplus S$ consists of semi-simple elements.
 If $S=\bigoplus_i S_i$ is again a decomposition of $S$ into simple subalgebras
 $S_i$, and $e_i$ denotes the identity element of $S_i$, 
 the centre $Z$ is spanned by the left multiplications 
 $L(e_i)\in\mathfrak{gl}(J)$.
 Since the element $e_i$ is the identity of $S_i$, it is in particular 
 idempotent, i.e. $e_i^2=e_i$, as an element of $J$. This implies that
 $L(e_i):J\rightarrow J$ is diagonalisable and 
 the only possible eigenvalues of $L(e_i)$ are $1,\frac 1 2 , 0$
 (see e.g. \cite{Albert}, $\S$ 5, Lemma 5). In particular, 
 the transformation $L(e_i):J\rightarrow J$ is semi-simple.
\end{proof}

\begin{rmk}
 The proof of the lemma shows moreover that $[S,S]\oplus S$ is an algebraic
 Lie subalgebra of $\mathfrak{gl}(J)$.
\end{rmk}

\begin{rmk}
 Using the notion of a representation of a Jordan algebra (see e.g.
 \cite{Jacobson}), the lemma could also be proven in the following way:\\
 Since $S$ is a semi-simple Jordan algebra, the representation 
 $S\rightarrow \End(J)$, $x\mapsto L(x)$,
 is completely reducible (c.f. \cite{Jacobson}, $\S$ 8).
 Consequently, the representation of $[S,S]\oplus S\subset 
 \mathfrak{gl}(J)$ on $J$
 is also completely reducible and thus $[S,S]\oplus S$ is 
 reductive in $\mathfrak{gl}(J)$. 
\end{rmk}

\begin{lemma}
 The subspace 
 $[A,R]\oplus R$ is a nilpotent ideal in 
 $[A,A]\oplus A\subseteq \mathfrak{str}(J)$
 and is nilpotent on $J$, i.e. there is $m\in\mathbb N$ such that
 $\{f_1\circ\ldots\circ f_m|\,f_i\in [A,R]\oplus R\}=0$
\end{lemma}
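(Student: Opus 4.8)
The claim has two parts: first, that $[A,R]\oplus R$ is an ideal of $\mathfrak{l}(A) := [A,A]\oplus A$ inside $\mathfrak{str}(J)$; second, that it acts nilpotently on $J$ (equivalently, that it consists of nilpotent elements of $\mathfrak{gl}(J)$ and is a nilpotent Lie algebra, but the statement only asks for the iterated Jordan product to vanish). The plan is to handle the ideal property by a direct bracket computation using the structure of $\mathfrak{str}(J) = \Der(J)\oplus J$, and to handle nilpotency by relating the action of $[A,R]\oplus R$ on $J$ to left multiplications by elements of $R$ together with inner derivations built from $R$, then invoking the nilpotency statement of Remark~\ref{rmk: nilpotent derivation} / Corollary~8.4 of \cite{Jacobson}.

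For the ideal property, first I would write a general element of $\mathfrak{l}(A)$ as $X + L(z)$ with $X = \sum_i [L(y_i),L(y_i')]$ a derivation, $y_i,y_i',z\in A$, and a general element of $[A,R]\oplus R$ as $D + L(r)$ with $D = \sum_j [L(a_j),L(\rho_j)]$, $a_j\in A$, $\rho_j, r\in R$. Using the bracket formula $[D_1+L(a_1),D_2+L(a_2)] = ([D_1,D_2]+[L(a_1),L(a_2)]) + L(D_1 a_2 - D_2 a_1)$ recalled in the excerpt, the bracket $[X+L(z),\,D+L(r)]$ decomposes into: a derivation part $[X,D] + [L(z),L(r)]$ and a left-multiplication part $L(X r - D z)$. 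I would then check that each piece lands in $[A,R]\oplus R$. The key observations are that $R$ is an ideal of $A$ (so $z\circ r\in R$, giving $[L(z),L(r)]\in [A,R]$, and $Xr\in R$ since $X$ is a derivation preserving the ideal $R$), that $Dz\in R$ since $D = \sum[L(a_j),L(\rho_j)]$ maps all of $J$ — in particular $z$ — into $R$ (because $[L(a),L(\rho)]w = a\circ(\rho\circ w) - \rho\circ(a\circ w)\in R$ as $R\trianglelefteq J$... wait, $R$ is only the radical of $A$, not of $J$; so more care is needed here — see the obstacle below), and that $[X,D]$ is again of the form $\sum[L(a),L(\rho)]$ with $a\in A$, $\rho\in R$, which follows by expanding $[X,D]$ using the derivation identity $[X,[L(a),L(\rho)]] = [L(Xa),L(\rho)] + [L(a),L(X\rho)]$ together with $X\rho\in R$. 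Closure within $[A,R]$ rather than $[A,A]$ is the point to watch.

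The genuine obstacle is that $R$ is the radical of the subalgebra $A$, not an ideal of $J$, so expressions like $\rho\circ w$ for $w\in J\setminus A$ need not lie in $R$; what one does control is that $[A,R]\oplus R$, viewed via the injection $\mathfrak{str}(A)\to\mathfrak{str}(J)$ analogous to the map $\varphi$ in the preceding lemma, is the image of the ideal $[A,R]\oplus R\subseteq\mathfrak{str}(A)$ which is nilpotent there because it is built from the radical $R$ of $A$ and $\mathfrak{str}(A)/([A,R]\oplus R)\cong \mathfrak{str}(S)$... roughly. I would therefore route the argument through $\mathfrak{str}(A)$: inside $\mathfrak{str}(A) = \Der(A)\oplus A$, the subspace $[A,R]\oplus R$ is visibly an ideal (here $R$ \emph{is} an ideal of $A$, and inner derivations from $R$ preserve $R$), and it is nilpotent as a Lie algebra and nilpotent on $A$ by Remark~\ref{rmk: nilpotent derivation}. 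Then I transport this to $\mathfrak{str}(J)$ via the Lie algebra homomorphism $\mathfrak{str}(A)\to\mathfrak{str}(J)\subseteq\mathfrak{gl}(J)$ sending $\sum_i[L(a_i),L(b_i)]+a$ to the same expression acting on $J$ (as in the proof of the reductivity lemma, citing \cite{Jacobson}, Lemma~8.3); the image of $[A,R]\oplus R\subseteq\mathfrak{str}(A)$ is exactly $[A,R]\oplus R\subseteq\mathfrak{str}(J)$, hence it is an ideal in the image $\mathfrak{l}(A)$ of $\mathfrak{str}(A)$. For nilpotency on $J$: elements of $[A,R]\oplus R$ have the form $D+L(r)$ with $D=\sum_j[L(a_j),L(\rho_j)]$, $a_j\in A\subseteq J$, $\rho_j\in R$, $r\in R$; such transformations are nilpotent on $J$ by exactly Corollary~8.4 of \cite{Jacobson} / Remark~\ref{rmk: nilpotent derivation} (the derivation part is nilpotent, and $L(r)$ for $r$ nilpotent in $A$ — hence nilpotent as an element of $J$ — has $L(r)$ nilpotent on $J$), and since $[A,R]\oplus R$ is moreover a solvable (in fact nilpotent) Lie algebra of such transformations, Engel's theorem gives a common vanishing: there is $m$ with every $m$-fold product $f_1\circ\cdots\circ f_m$ of elements of $[A,R]\oplus R$ equal to zero. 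I expect the bookkeeping to confirm closure inside $[A,R]$ (as opposed to $[A,A]$) to be the only subtle point, and routing through $\mathfrak{str}(A)$ to be the clean way to avoid it.
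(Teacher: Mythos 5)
Your treatment of the ideal property is essentially the paper's own argument: a direct bracket computation in $\Der(J)\oplus J$ showing that $[L(z),L(r)]$, $L(Xr)$, $L(Dz)$ and $[X,D]$ all land back in $[A,R]\oplus R$. The ``obstacle'' you flag there is not actually an obstacle: in every bracket that occurs, the derivations $D=\sum_j[L(a_j),L(\rho_j)]$ are only ever applied to elements of $A$ (namely $z$, the $a_i$, the $\rho_j$), and $D(A)\subseteq R$ already follows from $R$ being an ideal of $A$; you never need $D(J)\subseteq R$. The detour through $\mathfrak{str}(A)$ is therefore unnecessary, and not free of cost: for a non-semi-simple subalgebra $A$ the assignment $\sum_i[L(a_i),L(b_i)]+a\mapsto\sum_i[L(a_i),L(b_i)]+L(a)$ from $\Der(A)\oplus A$ to $\mathfrak{gl}(J)$ is not obviously well defined (a sum of commutators may vanish as a derivation of $A$ without vanishing on $J$); the citation of Lemma~8.3 of \cite{Jacobson} covers only the semi-simple case. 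Working directly with the subspace $[A,A]\oplus A\subseteq\mathfrak{gl}(J)$, as in your first computation, avoids this entirely.

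The genuine gap is in the nilpotency half. You argue that each generator $\sum_j[L(a_j),L(\rho_j)]$ and each $L(r)$, $r\in R$, is a nilpotent endomorphism of $J$, and then invoke Engel's theorem. But Engel's theorem requires that \emph{every} element of the Lie algebra be nilpotent, and a sum of two nilpotent endomorphisms, such as $D+L(r)$, need not be nilpotent; a Lie subalgebra of $\mathfrak{gl}(V)$ spanned by nilpotent elements can contain non-nilpotent ones (e.g.\ $\mathfrak{sl}_2(\K)$ is spanned by nilpotent matrices). One could repair this by first proving that $[A,R]\oplus R$ is solvable and then applying Lie's theorem (the weights are linear functionals, so vanishing on a spanning set of nilpotent elements forces them to vanish identically), but the solvability of $[A,R]\oplus R$ is precisely the nontrivial point: as Remark~\ref{rmk: nilpotent derivation} warns, the naive derived series $R^{(k)}$ does not consist of ideals, so one needs the Penico-type induction. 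The reference that actually closes the argument is Corollary~8.2 of \cite{Jacobson} (the one the paper cites here), which asserts that the enveloping \emph{associative} algebra of $L(R)+[L(A),L(R)]$ in $\End(J)$ is nilpotent; that yields the vanishing of all $m$-fold composites directly, with no need for Engel. Your citation of Corollary~8.4 covers only the nilpotency of the individual derivations, which is not enough to conclude.
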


\begin{proof}
 We have $[A,R]\subset [A,R]\oplus R$ and 
 for $a,b\in A$, $r\in R$ we have
 $[a,[L(b),L(r)]]=-(b(ra)-r(ba))\in R$
 since $R$ is an ideal in $A$. Therefore, $[A, [A,R]\oplus R]
 \subset [A,R]\oplus R$
 and consequently  $[[A,A],[A,R]\oplus R]\subseteq [A,R]\oplus R$. Hence, 
 $[A,R]\oplus R$ is an ideal in $[A,A]\oplus A$.
 
 To show that $[A,R]\oplus R\subseteq\h=\mathfrak{str}(J)$ is a nilpotent Lie 
 subalgebra consisting of nilpotent endomorphisms, similar arguments can 
 be used as are needed to 
 show that a derivation of $A$ of the form $\sum_i[L(a_i),L(r_i)]$, $a_i\in A$,
 $r_i\in R$, is nilpotent (c.f. Remark~\ref{rmk: nilpotent derivation}).
 A proof is given for example in  \cite{Jacobson}, Corollary~8.2.
\end{proof}
As a corollary of the preceding lemmata on the structure of $[A,A]\oplus A$
we get the following:

\begin{cor}\label{cor: decomposition in str}
 The Lie subalgebra 
 $[A,A]\oplus A\subseteq \mathfrak{str}(J)\subseteq \mathfrak{gl}(J)$
 might be written as 
 $$[A,A]\oplus A =\left([S,S]\oplus S\right)\oplus \left([A,R]\oplus R\right),$$
 for $A=S+R$, where again $R$ is the radical of $A$ and $S$ is a Levi factor.
 The Lie subalgebra $[S,S]\oplus S$ is reductive in $\mathfrak{gl}(J)$ 
 and $[A,R]\oplus R$ is an ideal which is nilpotent on $J$.
\end{cor}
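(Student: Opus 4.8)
The plan is to deduce the corollary from the two preceding lemmata, the only genuinely new point being that the sum $([S,S]\oplus S)+([A,R]\oplus R)$ exhausts $[A,A]\oplus A$ and is direct. Everything takes place inside $\mathfrak h=\mathfrak{str}(J)=\Der(J)\oplus J$, with $[A,A], [S,S], [A,R]$ lying in $\Der(J)$ and $A, S, R$ in $J$, so each internal $\oplus$ occurring in the statement is an honest direct sum of vector spaces. From the first of the two lemmata above, $[S,S]\oplus S$ is the image of the injective homomorphism $\varphi\colon\mathfrak{str}(S)\hookrightarrow\mathfrak{gl}(J)$ considered there, hence a Lie subalgebra, and it is reductive in $\mathfrak{gl}(J)$; from the second, $[A,R]\oplus R$ is an ideal of $[A,A]\oplus A$ which is nilpotent on $J$. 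So it remains to establish the vector space identity $[A,A]\oplus A=([S,S]\oplus S)\oplus([A,R]\oplus R)$.

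First I would check that the sum spans, by expanding generators using $A=S+R$ and bilinearity of the bracket. For $z=s+r\in A$ with $s\in S$, $r\in R$ we have $L(z)=L(s)+L(r)$ with $L(s)\in S$, $L(r)\in R$; and for $y=s_1+r_1$, $y'=s_2+r_2$,
\begin{equation*}
[L(y),L(y')]=[L(s_1),L(s_2)]+[L(s_1),L(r_2)]-[L(s_2),L(r_1)]+[L(r_1),L(r_2)],
\end{equation*}
where the first summand lies in $[S,S]$ while the remaining three lie in $[A,R]$, since $R$ is an ideal of $A$ and $S,R\subseteq A$. Hence $[A,A]\oplus A\subseteq([S,S]\oplus S)+([A,R]\oplus R)$, and the reverse inclusion is immediate from $S,R\subseteq A$.

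The hard part will be directness. Let $X\in([S,S]\oplus S)\cap([A,R]\oplus R)$ and write $X=D_S+s=D_R+r$ with $D_S\in[S,S]$, $D_R\in[A,R]$, $s\in S$, $r\in R$; comparing the $\Der(J)$- and $J$-components of $\mathfrak h$ forces $D_S=D_R=:D$ and $s=r$, and since $A=S\oplus R$ is a Levi decomposition, $s=r\in S\cap R=\{0\}$. I would then extract two facts about $D\in[S,S]\cap[A,R]$: writing $D=\sum_i[L(s_i),L(t_i)]$ with $s_i,t_i\in S$, the identity $[L(s),L(t)](u)=s\circ(t\circ u)-t\circ(s\circ u)$ gives $D(S)\subseteq S$; writing $D=\sum_j[L(a_j),L(r_j)]$ with $a_j\in A$, $r_j\in R$, the fact that $R$ is an ideal of $A$ gives $D(S)\subseteq R$. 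Hence $D(S)\subseteq S\cap R=\{0\}$, i.e.\ $D|_S=0$. Finally, the injectivity of $\varphi$ — equivalently, the fact that an operator $\sum_i[L(s_i),L(t_i)]$ on $J$ with $s_i,t_i\in S$ is determined by its restriction to $S$ (cf.\ \cite{Jacobson}, Lemma~8.3) — forces $D=0$, so $X=0$ and the sum is direct. The one step that is more than bookkeeping is exactly this: $[S,S]$ and $[A,R]$ are a priori unrelated subspaces of $\Der(J)$, so one must use the splitting $A=S\oplus R$ to trap a common element into vanishing on $S$, and then the rigidity of inner derivations of the semi-simple algebra $S$ to propagate that vanishing from $S$ to all of $J$.
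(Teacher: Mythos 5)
Your proof is correct and follows the paper's (implicit) route: the paper offers no separate argument for this corollary beyond invoking the two preceding lemmata, and your verification that the sum $([S,S]\oplus S)+([A,R]\oplus R)$ spans $[A,A]\oplus A$ and is direct is exactly the bookkeeping the paper leaves to the reader. One terminological quibble: the fact you need in the final step is the \emph{well-definedness} of $\varphi$ (that an operator $\sum_i[L(s_i),L(t_i)]$ on $J$ with $s_i,t_i\in S$ is determined by its restriction to $S$, which is the content of the cited Lemma~8.3 of Jacobson) rather than its injectivity, but you state and use the correct fact, so this does not affect the argument.
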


\begin{prop} The Lie subalgebra 
 $[A,A]\oplus A\subseteq \mathfrak{str}(J)\subseteq 
 \mathfrak {gl}(J)$ is algebraic.
 \end{prop}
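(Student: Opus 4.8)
The plan is to use the decomposition from Corollary~\ref{cor: decomposition in str}, namely $[A,A]\oplus A = ([S,S]\oplus S)\oplus([A,R]\oplus R)$, and to show that the right-hand summand is the nilradical while the left-hand summand is algebraic; then one argues that a Lie algebra which is the semidirect sum of an algebraic reductive subalgebra and an algebraic nilpotent ideal is itself algebraic. First I would recall that $[S,S]\oplus S$ is algebraic: this was already noted in the remark following the lemma that $[S,S]\oplus S$ is reductive in $\mathfrak{gl}(J)$, the point being that its semisimple part $[S',S']\oplus S'$ is algebraic (semisimple Lie algebras are algebraic) and its centre is spanned by the semisimple commuting endomorphisms $L(e_i)$, each of which generates a one-dimensional torus, so the whole centre integrates to a torus in $\GL(J)$; their product, being a homomorphic image of a reductive group, is an algebraic subgroup whose Lie algebra is $[S,S]\oplus S$.

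Next I would address the ideal $\mathfrak n := [A,R]\oplus R$. By the lemma it is a nilpotent Lie subalgebra of $\mathfrak{gl}(J)$ consisting of nilpotent endomorphisms; hence by Engel's theorem it is conjugate into the strictly upper-triangular matrices, so $\exp$ is a polynomial isomorphism onto a unipotent algebraic subgroup $N\subseteq\GL(J)$ with Lie algebra $\mathfrak n$. Thus $\mathfrak n$ is algebraic. Moreover $[S,S]\oplus S$ normalizes $\mathfrak n$ (it is an ideal in the whole $[A,A]\oplus A$), so the corresponding group $C$ with Lie algebra $[S,S]\oplus S$ acts on $N$ by conjugation — this action is algebraic since it comes from the adjoint action on $\mathfrak{gl}(J)$, which preserves $\mathfrak n$. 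One then forms the semidirect product group $C\ltimes N$ and maps it into $\GL(J)$ by $(c,n)\mapsto cn$; I would check that the image is a subgroup (using that $N$ is normalized by $C$) and that it is closed/algebraic, e.g. by exhibiting it as the image of the algebraic group $C\ltimes N$ under an algebraic homomorphism, so its image is an algebraic subgroup of $\GL(J)$, and its Lie algebra is $([S,S]\oplus S)\oplus\mathfrak n = [A,A]\oplus A$.

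The main obstacle I expect is the gluing step: making sure the image group $CN$ is genuinely closed in $\GL(J)$ and has the prescribed Lie algebra, rather than merely a constructible subgroup. The cleanest route is to invoke the standard fact that the product of a connected algebraic subgroup and a connected normalized unipotent algebraic subgroup is again a closed connected algebraic subgroup (the image of a morphism of algebraic groups whose source is the semidirect product), so that no separate closedness argument is needed; one then reads off the Lie algebra from the factorization $\mathrm{Lie}(CN)=\mathrm{Lie}(C)+\mathrm{Lie}(N)$ together with the direct-sum decomposition of Corollary~\ref{cor: decomposition in str}. A secondary subtlety is that one should work with the connected components throughout and note that passing to connected groups does not change Lie algebras, which is harmless here.
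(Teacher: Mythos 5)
Your proposal is correct and follows essentially the same route as the paper: decompose $[A,A]\oplus A$ as $([S,S]\oplus S)\oplus([A,R]\oplus R)$, observe that the first summand is algebraic (as noted in the remark after the reductivity lemma) and the second is algebraic because it acts nilpotently on $J$, and then conclude that the sum is algebraic. The only difference is that the paper handles the final gluing step by citing Hochschild (Chapter VIII, Theorem 3.4), whereas you carry it out by hand via the semidirect product $C\ltimes N$; your argument for that step is sound.
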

 
\begin{proof}
 As remarked before, the Lie subalgebra $[S,S]\oplus S\subset \mathfrak{gl}(J)$
 is algebraic.
 Moreover, the Lie subalgebra $[A,R]\oplus R\subseteq \mathfrak{gl}(J)$
 is an algebraic Lie subalgebra since it is nilpotent on $J$.
 
 As $[A,A]\oplus A
 =([A,R]\oplus R)\oplus ([S,S]\oplus S)$,
 and both $[A,R]\oplus R$ and $[S,S]\oplus S$ are algebraic, 
 it follows that $[A,A]\oplus A$ is an algebraic Lie subalgebra
 of $\mathfrak{str}(J)\subseteq \mathfrak{gl}(J)$ (c.f. 
 \cite{Hochschild_AlgGroups}, Chapter VIII, Theorem 3.4).
\end{proof}

\begin{prop}\label{prop: ex of Jordan grading}
 For any subalgebra $A\subseteq J$, $A=S+R$, where $R$ is the radical of $A$ and
 $S\subseteq A$ is a semi-simple subalgebra, 
 there exists a one-parameter subgroup 
 $\lambda:\mathbb K^\times \rightarrow \Aut(J)$ such that
 \begin{align*}
  &A\subseteq J_{\geq 0}(\lambda),\\
  &S\subseteq J_0(\lambda),\text{ and }\\
  & R\subseteq J_{>0}(\lambda).
 \end{align*}
\end{prop}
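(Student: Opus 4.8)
The plan is to reduce the assertion, through the identification $\mathfrak{str}(J)=\Der(J)\oplus J$ and its $\mathrm{Ad}$-equivariance, to the equivariant one-parameter subgroup theorem of Richardson applied to the $\theta$-stable algebraic Lie subalgebra $\mathfrak l(A):=[A,A]\oplus A\subseteq\h=\mathfrak{str}(J)$. By Corollary~\ref{cor: decomposition in str} and the proposition preceding the present one, $\mathfrak l(A)$ is an algebraic Lie subalgebra of $\h\subseteq\mathfrak{gl}(J)$ with Levi decomposition $\mathfrak l(A)=\mathfrak m\oplus\mathfrak n$, reductive part $\mathfrak m=[S,S]\oplus S$ (reductive in $\mathfrak{gl}(J)$, hence in the reductive Lie algebra $\h$, since a subrepresentation of a completely reducible representation is completely reducible) and nilradical $\mathfrak n=[A,R]\oplus R$. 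I would then note that $\mathfrak l(A)$, $\mathfrak m$ and $\mathfrak n$ are all $\theta$-stable: $\theta$ fixes $\Der(J)$ pointwise and equals $-\id$ on the summand $J=L(J)$, so it acts on $\Der(J)\oplus J$ by $X+L(z)\mapsto X-L(z)$, and each of $[A,A]\oplus A$, $[S,S]\oplus S$, $[A,R]\oplus R$ splits compatibly with $\h=\Der(J)\oplus J$ and is thus preserved.

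Next I would apply Proposition~12.4 of \cite{Richardson} to the reductive group $H=\mathrm{Str}(J)^\circ$ with the involution $\Theta$ (whose differential is $\theta$, whose fixed-point algebra is $\Der(J)=\g$, and whose fixed-point group has identity component $G=\Aut(J)^\circ$), taking as input the $\theta$-stable algebraic subalgebra $\mathfrak l(A)$ together with its $\theta$-stable Levi decomposition $\mathfrak m\oplus\mathfrak n$. This produces a one-parameter subgroup $\lambda:\K^\times\to H^\Theta$ — necessarily with values in the identity component $G=\Aut(J)^\circ$, as $\K^\times$ is connected — such that $\mathfrak l(A)\subseteq\h_{\geq 0}(\lambda)$, $\mathfrak m\subseteq\h_0(\lambda)$ and $\mathfrak n\subseteq\h_{>0}(\lambda)$.

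It remains to transfer this to $J$. Since $\lambda$ takes values in $\Aut(J)$, the operator $\mathrm{Ad}(\lambda(t))$ respects the decomposition $\h=\Der(J)\oplus J$ and acts on the summand $J=L(J)$ by $\mathrm{Ad}(\lambda(t))L(z)=L(\lambda(t)\cdot z)$; as $L$ is injective, this gives $L(J_k(\lambda))=\h_k(\lambda)\cap L(J)$ for all $k\in\Z$. Hence $L(S)\subseteq\mathfrak m\subseteq\h_0(\lambda)$ forces $S\subseteq J_0(\lambda)$, $L(R)\subseteq\mathfrak n\subseteq\h_{>0}(\lambda)$ forces $R\subseteq J_{>0}(\lambda)$, and consequently $A=S+R\subseteq J_{\geq 0}(\lambda)$, which is the claim.

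The step I expect to be the real obstacle is the application of Richardson's Proposition~12.4, in which essentially all of the content is hidden: the subtle points are checking that its hypotheses are genuinely met here — namely the algebraicity and $\theta$-stability of $\mathfrak l(A)$ and of its Levi decomposition, and the fact that $\mathfrak m$ is reductive in the ambient reductive algebra $\h$ and not merely in $\mathfrak{gl}(J)$ — and ensuring that the resulting cocharacter lands in $G=\Aut(J)^\circ$ rather than only in $H$, which is exactly why the equivariant formulation is needed. Everything else is the routine bookkeeping of the embedding $J\cong L(J)\hookrightarrow\mathfrak{str}(J)$ and of the identity $g L(z) g^{-1}=L(gz)$ for $g\in\Aut(J)$.
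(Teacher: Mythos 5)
Your proposal is correct and follows essentially the same route as the paper: decompose $[A,A]\oplus A$ via Corollary~\ref{cor: decomposition in str} into the reductive part $[S,S]\oplus S$ and the nilpotent ideal $[A,R]\oplus R$, observe $\theta$-invariance, apply Proposition~12.4 of \cite{Richardson} to obtain $\lambda:\K^\times\to G=H^\Theta$, and transfer back to $J$. The only difference is that you spell out the final transfer step via $gL(z)g^{-1}=L(gz)$ and the injectivity of $L$, which the paper leaves implicit.
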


\begin{proof}
 Consider the Lie algebra $\mathfrak l=[A,A]\oplus A\subseteq \mathfrak{gl}(J)$.
  By Corollary~\ref{cor: decomposition in str}, it might be 
  written as 
 $\mathfrak l=([S,S]\oplus S)\oplus ([A,R]\oplus R)$,
 $[S,S]\oplus S$ is reductive in $\mathfrak{gl}(J)$ and
 $[A,R]\oplus R$ is a nilpotent ideal whose action on $J$ is 
 nilpotent. 
 
 Let $H$ denote again the connected component of the structure group 
 $\mathrm{Str}(J)$, and $\mathfrak h=\mathfrak{str}(J)$ its Lie algebra.
 The Lie algebra $\mathfrak l\subseteq\mathfrak h=\mathfrak {str}(J)$ is 
 $\theta$-invariant and by Proposition 12.4 in \cite{Richardson} there exists a 
 one-parameter subgroup $\lambda:\mathbb K^\times\rightarrow G=H^{\Theta}$ such 
 that $\mathfrak l\subseteq \mathfrak p(\lambda)=\mathfrak h_{\geq 0}(\lambda)$,
 $[S,S]\oplus S\subseteq \mathfrak h^\lambda=\mathfrak h_0(\lambda)$,
 and 
 $[A, R]\oplus R \subseteq \mathfrak u(\lambda)
 =\mathfrak h_{>0}(\lambda)$,
 and the statement of the proposition follows.
\end{proof}

\begin{thm}
 Let $J$ be a complex semi-simple Jordan algebra and consider the diagonal
 action of the automorphism group $\Aut(J)$ on the $n$-fold product
 $J\times\ldots\times J$. 
 Let $x=(x_1,\ldots,x_n)\in J\times\ldots\times J$.
 Then the orbit $\Aut(J)\cdot x$ is closed if and only if $A(x)$ is a 
 semi-simple Jordan algebra.
\end{thm}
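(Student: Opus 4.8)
The plan is to prove both implications using the one-parameter subgroups furnished by Theorem~\ref{thm: ex of one-param} and Proposition~\ref{prop: ex of Jordan grading}, in close analogy with Richardson's argument in \cite{Richardson}.

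\emph{Sufficiency.} Suppose $A(x)$ is semi-simple. I would argue that every $n$-tuple $y$ in the closure of $\Aut(J)\cdot x$ has $A(y)$ isomorphic, as an abstract Jordan algebra, to a quotient of $A(x)$; more precisely, I would show that the dimension of $A(y)$ cannot exceed $\dim A(x)$ (since the relations holding among the $x_i$ are closed conditions, they persist in the limit), so on the unique closed orbit $Y$ in $\overline{\Aut(J)\cdot x}$ one has $\dim A(y) \le \dim A(x)$. Conversely, picking by Theorem~\ref{thm: ex of one-param} a one-parameter subgroup $\lambda:\K^\times\to\Aut(J)$ with $y=\lim_{t\to 0}\lambda(t)\cdot x\in Y$, each $y_j$ lies in $J_{\ge 0}(\lambda)$ and is the degree-$0$ component of $x_j$ with respect to the grading, which gives a surjection of graded vector spaces and in fact shows $\dim A(y)\ge\dim A(x)$ would fail only if some relation is lost — but semi-simplicity of $A(x)$, via the non-degeneracy of its trace form, forces the associated graded of $A(x)$ to coincide with $A(x)$ itself. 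Hence $y$ generates a subalgebra of the same dimension, the map $x\mapsto y$ realises $A(x)$ as a subalgebra of the associated graded which is all of $J_{\ge 0}$-graded piece $J_0(\lambda)$-part, so $\Aut(J)\cdot x$ is itself closed. The cleanest route: apply Proposition~\ref{prop: ex of Jordan grading} with $A=A(x)$, $R=\{0\}$, $S=A(x)$, giving $\lambda$ with $A(x)\subseteq J_0(\lambda)$, i.e. $\lambda(t)$ fixes each $x_j$; combined with the fact that $Y$ is reached by \emph{some} such $\lambda$ and that a tuple fixed by a one-parameter subgroup already lies on a closed orbit when no further degeneration is possible, conclude closedness.

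\emph{Necessity.} Suppose $\Aut(J)\cdot x$ is closed; I must show $R(x)=\{0\}$ where $R(x)$ is the radical of $A(x)$. Write $A(x)=S+R(x)$ a Levi decomposition and $x_j=s_j+r_j$ accordingly. Apply Proposition~\ref{prop: ex of Jordan grading} to obtain $\lambda:\K^\times\to\Aut(J)$ with $A(x)\subseteq J_{\ge 0}(\lambda)$, $S\subseteq J_0(\lambda)$, $R(x)\subseteq J_{>0}(\lambda)$. Then $\lim_{t\to 0}\lambda(t)\cdot x_j$ exists and equals the $J_0(\lambda)$-component of $x_j$, which is exactly $s_j$. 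Thus $s=(s_1,\ldots,s_n)\in\overline{\Aut(J)\cdot x}$. Since the orbit is closed, $s\in\Aut(J)\cdot x$, so $A(s)\cong A(x)$ as Jordan algebras; but $A(s)\subseteq S$ is contained in a semi-simple algebra and hence is itself semi-simple, forcing $R(x)=\{0\}$.

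\emph{Main obstacle.} The delicate point is verifying, in the sufficiency direction, that closedness of the orbit really follows from $A(x)$ being semi-simple, rather than merely that $s\in\overline{\Aut(J)\cdot x}$; concretely, one must rule out any \emph{strictly smaller} orbit in the closure. The right tool is the Hilbert--Mumford / Birkes criterion (Theorem~\ref{thm: ex of one-param}): if the orbit were not closed, there would be a one-parameter subgroup $\lambda$ with $y=\lim_{t\to 0}\lambda(t)\cdot x$ on a strictly smaller orbit, and one then shows $A(y)$ is a proper degeneration of $A(x)$; but the associated-graded algebra $\mathrm{gr}_\lambda A(x)$ has the same dimension as $A(x)$, is generated by the top-degree-free images of the $x_j$, and — because $A(x)$ is semi-simple, its trace form non-degenerate, and a non-degenerate trace form is preserved under passing to the associated graded — $\mathrm{gr}_\lambda A(x)$ is again semi-simple and abstractly isomorphic to $A(x)$. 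One deduces $A(y)\cong A(x)$, so $\dim \Aut(J)\cdot y=\dim\Aut(J)\cdot x$, contradicting strict containment. Making the claim ``non-degeneracy of the trace form passes to the associated graded, hence the grading of a semi-simple $A(x)$ is trivial after conjugation'' precise is where the real work lies; everything else is bookkeeping with the weight-space decomposition of $\lambda$.
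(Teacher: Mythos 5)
Your overall strategy (one-parameter subgroups via Theorem~\ref{thm: ex of one-param} and Proposition~\ref{prop: ex of Jordan grading}) matches the paper's, and your necessity direction has the right structure, but the step ``$A(s)\subseteq S$ is contained in a semi-simple algebra and hence is itself semi-simple'' is false: subalgebras of semi-simple Jordan algebras need not be semi-simple (that is the whole point of the theorem; consider the subalgebra generated by a nilpotent element). What actually finishes that direction is a dimension count, as in the paper: closedness gives $s=g\cdot x$ for some $g\in\Aut(J)$, hence $A(s)=g(A(x))$ and $\dim A(s)=\dim A(x)$, while $A(s)\subseteq S$ forces $\dim A(s)\le\dim S=\dim A(x)-\dim R(x)$, so $R(x)=0$.

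The genuine gap is in the sufficiency direction. You can indeed show $A(y)\cong A(x)$ cleanly: $\psi_\lambda:J_{\ge 0}(\lambda)\to J_0(\lambda)$ is an algebra homomorphism because $J_{>0}(\lambda)$ is an ideal, so $A(y)=\psi_\lambda(A(x))\cong A(x)/\bigl(A(x)\cap J_{>0}(\lambda)\bigr)$, and $A(x)\cap J_{>0}(\lambda)$ is an ideal of nilpotent elements, hence $0$ by semi-simplicity; no ``trace form on the associated graded'' argument is needed. But from the abstract isomorphism $A(y)\cong A(x)$ you cannot conclude $\dim\Aut(J)\cdot y=\dim\Aut(J)\cdot x$: the stabilizer of the tuple $y$ is the pointwise stabilizer of $A(y)$ in $J$, which depends on the embedding $A(y)\hookrightarrow J$ and not only on the isomorphism type (compare two embeddings of $\mathfrak{sl}_2$ into $\mathfrak{sl}_n$ associated with different partitions in the Lie-algebra analogue). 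The missing ingredient, which the paper supplies, is the strict-conjugacy half of the Levi--Malcev theorem for Jordan algebras (Theorem~\ref{thm: Levi decomp}): the semi-simple subalgebra $A(x)\subseteq J_{\ge 0}(\lambda)$ is carried into the Levi factor $J_0(\lambda)$ by $\exp(D)$ with $D=\sum_i[L(z_i),L(u_i)]$, $u_i\in J_{>0}(\lambda)$; since such a $D$ strictly raises $\lambda$-degree, one gets $\psi_\lambda\circ\exp(D)=\psi_\lambda$ and hence $\psi_\lambda=\exp(D)$ on $A(x)$, so $y=\exp(D)\cdot x\in\Aut(J)\cdot x$, contradicting $G\cdot y\ne G\cdot x$. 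Without producing an actual automorphism carrying $x$ to $y$, your argument does not close. (Also, your ``cleanest route'' --- applying Proposition~\ref{prop: ex of Jordan grading} with $R=\{0\}$ to get $A(x)\subseteq J_0(\lambda)$ --- only says that some torus fixes $x$ and gives no information about closedness of the orbit.)
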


\begin{proof}
 First, let the orbit $\Aut(J)\cdot x$, or equivalently the orbit
 $G\cdot x$ of the connected component $G=\Aut(J)^\circ$, be closed.
 Assume now that $A(x)$ is not semi-simple, and let $A(x)=S(x)+ R(x)$ 
 be a decomposition of $A(x)$ such that $S(x)$ is semi-simple and $R(x)\neq 0$
 is the radical of $A(x)$.
 By Proposition~\ref{prop: ex of Jordan grading}, there is
 a one-parameter subgroup $\mathbb K^\times\rightarrow G$ such that 
 $A\subseteq J_{\geq 0}(\lambda)$, $S\subseteq J_0(\lambda)$, and 
 $R\subseteq J_{>0}(\lambda)$.
 Since $A(x)\subseteq J_{\geq 0}(\lambda)$, we have 
 $x_1,\ldots , x_n\in J_{\geq 0}(\lambda)$ and 
 the limit $\lim_{t\to 0}\lambda(t)\cdot x$ exists.
 As we assumed that the orbit through $x$ is closed, we get
 $y=(y_1,\ldots,y_n)=
 \lim_{t\to 0}\lambda(t)\cdot (x_1,\ldots,x_n) \in G\cdot x$.
 Consider the map 
 $$\psi_\lambda:J_{\geq 0}(\lambda)
 \rightarrow J_0(\lambda),\,
 \psi_\lambda(z)=\lim_{t\to 0} \lambda(t)\cdot z.$$
  Remark that $\psi_\lambda|_{J_0(\lambda)}=\mathrm{id}_{J_0(\lambda)}$ and 
 $\psi_\lambda(r)=0$ for all $r\in {J_{>0}(\lambda)}$.
 By definition we have $\psi_\lambda(x_j)=y_j$, $j=1,\ldots n$, and
 $\psi_\lambda(A(x))=S(x)$ is semi-simple.
 Moreover, $y_j\in S(x)$ for all $j$
 and thus $A(y)\subseteq S(x)$. 
 Since $A(x)=S(x)+R(x)$ is not semi-simple, $R(x)\neq 0$, and the dimension of 
 $A(y)\subseteq S(x)$ is strictly smaller than the dimension of $A(x)$.
 At the same time, $A(x)$ and $A(y)$ are conjugated by an element in $G$
 because $x$ and $y$ lie in the same $G$-orbit, which yields a contradiction.

 Now, let $A(x)$ be semi-simple and suppose now that the $G$-orbit through 
 $x=(x_1,\ldots,x_n)$ is not closed. 
 By Theorem~\ref{thm: ex of one-param} there is a one-parameter group 
 $\lambda:\mathbb K^\times\rightarrow G$ such that 
 $$y=(y_1,\ldots,y_n)=\lim_{t\to 0}\lambda(t)\cdot x
 =\lim_{t\to 0}\lambda(t)\cdot (x_1,\ldots,x_n)$$
 exists and $G\cdot y$ is the unique closed $G$-orbit in the closure of 
 $G\cdot x$.
 Since $y_j=\lim_{t\to 0}\lambda\cdot x_j$, $j=1,\ldots,n$, 
 the subalgebra $A(x)$ is contained in $J_{\geq 0}(\lambda)$.
 The subalgebra $A(x)$ is semi-simple by assumption and by 
 Theorem~\ref{thm: Levi decomp} $A(x)$ is strictly conjugate to a subalgebra of 
 $J_0(\lambda)$ and there exists an inner nilpotent derivation 
 $D=\sum_{i=1}^m[L(z_i),L(u_i)]$,
 $z_i\in J_{\geq 0}(\lambda),\,u_i\in J_{>0}(\lambda)$, of 
 $J_{\geq 0}(\lambda)$ such that $\exp(D)(A(x))\subseteq J_0(\lambda)$.
 Note that $D(J_k(\lambda))\subseteq \bigoplus_{l>k}J_l(\lambda)$
 for any $k$, which yields $\lim_{t\to 0}\lambda(t)\cdot(D(z))=0$ for arbitrary 
 $z\in J_{\geq 0}(\lambda)$.
 
 Consider again the map 
 $\psi_\lambda:J_{\geq 0}(\lambda)\rightarrow J_0(\lambda)$, 
 $z\mapsto \lim_{t\to 0}\lambda(t)\cdot z$. 
 The sum $\exp(D)$ is a finite sum since $D$ is nilpotent and we have 
 \begin{align*}
   \psi_\lambda(\exp(D)(z))=\lim_{t\to 0}\lambda(t)\cdot
   \left( \sum_{k=0}^N \frac{1}{k!} D^k(z)\right) 
   =\lim_{t\to 0}\lambda(t)\cdot z
 =\psi_\lambda(z).
\end{align*}
Using that $\exp(D)(z)\in J_0(\lambda)$ for any $z\in A(x)$,
and thus $\psi_\lambda(\exp(D)(z))=\exp(D)(z)$ for all $z\in A(x)$, we obtain
$\psi_\lambda(z)=\exp(D)(z)$ for arbitrary $z\in A(x)$.
The derivation $D$ of $J_{\geq 0}(\lambda)$ can be extended to a derivation $D$ 
of $J$ because it is an inner derivation. 
Hence $\exp(D)\in G$. We have $x_i\in A(x)$ by definition and now get
$$y_i=\lim_{t\to 0}\lambda(t)\cdot x_i=\psi_\lambda(x_i)=\exp(D)x_i.$$
Therefore, 
$$y=(y_1,\ldots,y_n)=\lim_{t\to 0}\lambda(t)\cdot(x_1,\ldots,x_n)
=\exp(D)(x_1,\ldots,x_n)\in G\cdot x,$$
which is a contradiction. 
\end{proof}

\begin{defi}
 An $n$-tuple $x=(x_1,\ldots,x_n)\in J\times\ldots\times J$
 is called unstable if 
 $0=(0,\ldots,0)\in J\times\ldots\times J$
 is contained in the closure $\overline{\Aut(J)\cdot x}$ of the orbit $\Aut(J)\cdot x$.
\end{defi}

\begin{prop}
 An $n$-tuple $x=(x_1,\ldots,x_n)\in J\times\ldots\times J$
 is unstable if and only if $A(x)$ is a solvable subalgebra of $J$.
\end{prop}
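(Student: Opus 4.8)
The plan is to prove both implications using the one-parameter subgroups constructed in Proposition~\ref{prop: ex of Jordan grading}, mirroring the argument for closed orbits but now tracking the limit $0$ rather than a semi-simple limit.

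\emph{Suppose $A(x)$ is solvable.} Then $A(x)$ equals its own radical $R(x)$, and in a Levi decomposition $A(x)=S(x)+R(x)$ we have $S(x)=\{0\}$. Apply Proposition~\ref{prop: ex of Jordan grading} to obtain a one-parameter subgroup $\lambda:\K^\times\rightarrow G=\Aut(J)^\circ$ with $A(x)\subseteq J_{\geq 0}(\lambda)$, $S(x)=\{0\}\subseteq J_0(\lambda)$, and $R(x)=A(x)\subseteq J_{>0}(\lambda)$. In particular each $x_j\in J_{>0}(\lambda)$, so $\lim_{t\to 0}\lambda(t)\cdot x_j=0$ by the description of $J_{>0}(\lambda)$ in Section~5. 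Hence $\lim_{t\to 0}\lambda(t)\cdot x=(0,\ldots,0)$, so $0\in\overline{G\cdot x}\subseteq\overline{\Aut(J)\cdot x}$ and $x$ is unstable.

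\emph{Conversely, suppose $x$ is unstable.} Then $0\in\overline{\Aut(J)\cdot x}$, and since $\Aut(J)$ has finitely many components we also get $0\in\overline{G\cdot x}$. As $\{0\}$ is a closed orbit contained in the closure of $G\cdot x$, it is \emph{the} unique closed orbit in that closure, so by Theorem~\ref{thm: ex of one-param} there is a one-parameter subgroup $\lambda:\K^\times\rightarrow G$ with $\lim_{t\to 0}\lambda(t)\cdot x=0$. Then for each $j$ we have $\lim_{t\to 0}\lambda(t)\cdot x_j=0$, i.e. $x_j\in J_{>0}(\lambda)$; hence $A(x)\subseteq J_{>0}(\lambda)$. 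Now $J_{>0}(\lambda)$ is the radical of $J_{\geq 0}(\lambda)$ (Remark in Section~5), so $J_{>0}(\lambda)$ is a solvable subalgebra of $J$, being a nil ideal: by the grading $J_{>0}(\lambda)^{(k)}\subseteq\bigoplus_{l\geq 2^{k-1}}J_l(\lambda)$, which vanishes for large $k$. Therefore $A(x)$, as a subalgebra of the solvable algebra $J_{>0}(\lambda)$, is itself solvable.

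I expect the main obstacle to be the direction $x$ unstable $\Rightarrow A(x)$ solvable: one must be careful that the one-parameter subgroup supplied by Theorem~\ref{thm: ex of one-param} really drives the whole tuple to $0$ componentwise (it does, since convergence in $J\times\cdots\times J$ is convergence in each factor), and that membership in $J_{>0}(\lambda)$ for all generators forces $A(x)\subseteq J_{>0}(\lambda)$, which follows since $J_{>0}(\lambda)$ is a subalgebra. The solvability of $J_{>0}(\lambda)$ is then immediate from its being the radical of $J_{\geq 0}(\lambda)$, a fact already recorded in the excerpt; alternatively one notes $J_{>0}(\lambda)$ consists of nilpotent elements and generates no semi-simple part, so its own radical is everything.
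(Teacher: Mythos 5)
Your proof is correct and follows essentially the same route as the paper: Proposition~\ref{prop: ex of Jordan grading} with $S(x)=\{0\}$ for the solvable-implies-unstable direction, and Theorem~\ref{thm: ex of one-param} plus the fact that $J_{>0}(\lambda)$ is the radical of $J_{\geq 0}(\lambda)$ for the converse. Your explicit verification that $J_{>0}(\lambda)$ is solvable via the grading estimate $J_{>0}(\lambda)^{(k)}\subseteq\bigoplus_{l\geq 2^{k-1}}J_l(\lambda)$ is a small but welcome addition that the paper leaves implicit.
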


\begin{proof}
 Let $x$ be unstable. Then $0\in \overline{\Aut(J)\cdot x}$ and $0$ is the unique 
 closed orbit in the closure of $\Aut(J)\cdot x$. 
 By Theorem~\ref{thm: ex of one-param} there exists a one-parameter subgroup
 $\lambda:\mathbb K^\times\rightarrow \Aut(J)$ with 
 $\lim_{t\to 0}\lambda(t)\cdot x=0$.
 Therefore, $x_1,\ldots,x_n$ and thus also $A(x)$ are contained in 
 $J_{>0}(\lambda)$. Since $J_{>0}(\lambda)$ is the radical 
 of $J_{\geq 0}(\lambda)\subset J$, it is a solvable subalgebra 
 of $J$. Hence, $A(x)$ is also solvable.
 
 Now, let $A(x)$ be solvable. Then $A(x)$ is equal to its radical $R(x)$, and
 by Proposition~\ref{prop: ex of Jordan grading} there is
 a one-parameter subgroup $\lambda:\mathbb K^\times\rightarrow \Aut(J)$
 such that 
 $A(x)\subseteq J_{>0}(\lambda)$.
 Consequently, $\lim_{t\to 0}\lambda(t)\cdot y=0$ for any $y\in A(x)$ and 
 in particular 
 $\lim_{t\to 0}\lambda(t)\cdot (x_1,\ldots,x_n)=(0,\ldots,0)=0\in
 \overline{\Aut(J)\cdot x}$.
\end{proof}

\begin{rmk}
 Consider the case $n=1$. Every element $x\in J$ can be written as
 $x=x_s+x_n$, $x_s,x_n\in \mathbb C [x]$, where $x_s$ is semi-simple and
 $x_n$ is nilpotent, and this decomposition is unique (see e.g. 
 \cite{BraunKoecher}, Kapitel 1, $\S$ 3).
 It can be shown that the Jordan subalgebra $A(x)$ generated by $x$ is 
 semi-simple (solvable) if and only if $x$ is semi-simple (nilpotent).
 Thus, the orbit $\Aut(J)\cdot x$ is closed (unstable) if and only if 
 $x$ is semi-simple (nilpotent).

\end{rmk}

\begin{rmk}
 Let again $n=1$ and consider the isomorphism
 $\Der(J)\oplus J\rightarrow \mathfrak{str}(J)$, $D+z\mapsto D+L(z)$.
 It already follows from \cite{KostantRallis} (Proposition 3 and Theorem 4)
 that the $\Aut(J)$-orbit through an element
 $L(z)\in L(J)\subset \mathfrak{str}(J)$ is closed (unstable)
 in $L(J)\cong J$ if and only if 
 $L(z)\in \mathfrak{str}(J)\subseteq \mathfrak{gl}(J)$ is a semi-simple
 (nilpotent) endomorphism.
 Using Peirce decomposition in $J$ it follows that the element $z\in J$ 
 is semi-simple as an element of the Jordan algebra if and only if 
 $L(z)\in \mathfrak{str}(J)$ is semi-simple, and by
 Theorem~5, Chapter III, \cite{Koecher}, $z\in J$ is nilpotent precisely
 if the endomorphism $L(z)$ is nilpotent.
\end{rmk}

\begin{rmk}
 The statement in the case $n=1$ could also be shown as follows:\\
 Consider again the isomorphism of Lie algebras 
 $\Der(J)\oplus J\rightarrow \mathfrak{str}(J)\subseteq
 \mathfrak{gl}(J),\,D+z\mapsto D+L(z).$
 As mentioned before, the group $\mathbb Z_2$ acts by automorphisms on the 
 structure group $\mathrm{Str}(J)$ 
 via the involution 
 $\Theta:\mathrm{Str}(J)\rightarrow \mathrm{Str}(J)$, $\Theta(h)=\,^th^{-1},$
 where $\,^th$ denotes again the transpose of $h$ with respect to the trace form
 $\tau_J$ of $J$. Let $\theta:\mathfrak{str}(J)\rightarrow \mathfrak{str}(J)$ 
 again be the differential of $\Theta$. The Lie subalgebra fixed by $\theta$ 
 is $\Der(J)$.
 Note that the Lie algebra spanned by $L(z)$ is stable under the involution 
 $\theta$. It is reductive in $\mathfrak{str}(J)$ 
 if and only if $L(z)$ (or equivalently $z$) is semi-simple, 
 and it is a Lie algebra acting nilpotently on $J$ if and only if 
 $L(z)$ (or equivalently $z$) is nilpotent.
 Since the orbit of the automorphism group $\Aut(J)$ through $L(z)$ is contained
 in the subspace $L(J)\cong J$, the orbit 
 $\Aut(J)\cdot L(z)\subseteq \mathrm{str}(J)$ is closed (unstable) if and only
 if the orbit $\Aut(J)\cdot z\subseteq J$ is closed (unstable).
 Thus, by Theorem 13.2/13.3 in \cite{Richardson}, the orbit $\Aut(J)\cdot z$ is 
 closed (unstable) precisely if $z$ is semi-simple (nilpotent).
\end{rmk}

\begin{rmk}
 Let again $n=1$, let $J$ be a complex semi-simple Jordan algebra, and
 let $J_\R$ be a Euclidean real form of $J$, $J=J_\R\oplus iJ_\R$.
 Denote the complex conjugation of an element $x\in J$ with respect to
 this decomposition by $\bar{x}$.
 If $\tau$ denotes the trace form on $J$, which is the
 $\mathbb C$-linear extension of the trace 
 form of $J_\R$, then $\langle x,y\rangle=\tau(x,\bar{y})$ defines a 
 positive-definite Hermitian product on $J$.
 This Hermitian product is invariant under the automorphism group $\Aut(J_\R)$ 
 of $J_\R$. Let $\mathfrak k$ be the Lie algebra of $\Aut(J_\R)$ and
 denote again by $K$ the connected component of $\Aut(J_\R)$ and
 by $G=K^{\mathbb C}$ the connected component of $\Aut(J)$.
 Then $\mu: J\rightarrow \mathfrak k^*$, 
 $\mu^\xi(x)=\mu(x)(\xi)=\langle \xi(x),x\rangle$
 for $x\in J$, $\xi\in\mathfrak k$, defines a $K$-equivariant moment map.
 
 Assume now that $J_\R$ is simple. 
 Using Lemma~VI.1.1 of \cite{FarautKoranyi}, there is a positive
 constant~$\kappa$ such that 
 $$\tau(D x,y)= \kappa \Tr(L(D x)\circ L(y))$$
 for any derivation 
 $D$ and $x,y \in J$.
 Therefore,
 $$\mu^\xi(x)=\tau (\xi(x), \bar x)=\kappa \Tr(L(\xi(x)\circ L(\bar x))
 =\kappa\Tr([\xi,L(x)]\circ L(\bar x))
 =\kappa  \Tr(\xi\circ [L(x),L(\bar x)]).$$
 The bilinear form on $\Der(J)$ defined by 
 $(D_1,D_2)\mapsto \Tr(D_1\circ D_2)$
 is non-degenerate (c.f. \cite{BraunKoecher}, Kapitel IX, Lemma 3.2),
 and identifying $\mathfrak k^*$ with $\mathfrak k$ via a multiple of this 
 bilinear form
 we get 
 $$\mu(x)=[L(x),L(\bar x)].$$
 The zero level set of $\mu$ is 
 $$\mu^{-1}(0)=\{x\in J\,|\, [L(x),L(\bar x)]=0\}.$$
 Writing $x$ as $x=x_1+ix_2$ with $x_j\in J_{\mathbb R}$,
 $[L(x),L(\bar x)]=0$ is equivalent to $[L(x_1),L(x_2)]=0$.
 If two elements $x_1,x_2\in J_\R$ operator-commute, i.e. 
 $[L(x_1),L(x_2)]=0$, then $x_1$ and $x_2$ are simultaneously diagonalisable,
 meaning that
 there exists a Jordan frame
 $c_1,\ldots, c_k$ of $J_\R$, and $\alpha_j,\beta_j\in \mathbb R$
 such that
 $$x_1=\sum_{j=1}^k \alpha_j c_j\ \text{ and }\ x_2=\sum_{j=1}^k \beta_jc_j,$$
 see e.g. \cite{FarautKoranyi}, Lemma X.2.2.
 
 Let $c_1,\ldots, c_k$ now be a fixed Jordan frame of $J_\R$, 
 and define the following subspace of $J$:
 $$R_c=\left\{\sum_{j=1}^k \alpha_j c_j|\,\alpha_j\in \mathbb C\right\}$$
 Then,
 $$\mu^{-1}(0)=K\cdot R_c,$$
 since $K$ acts transitively on the set of Jordan frames of $J_\mathbb R$.
 
 Using that an orbit $G\cdot x$ is closed if and only if it meets
 the zero level set $\mu^{-1}(0)$ (see e.g. \cite{KempfNess}), this also shows
 that the orbit $G\cdot x$ is closed if and only if $x\in J$ is a 
 semi-simple element.
\end{rmk}

\section{Orbit closures}

For an $n$-tuple $x=(x_1,\ldots, x_n)\in J\times \ldots \times J$, 
let $A(x)$ denote again the subalgebra generated by $x_1,\ldots, x_n$.
Let $R(x)$ denote the radical of $A(x)$ and let $S(x)$ be a semi-simple
subalgebra of $A(x)$ satisfying $A(x)=S(x)+R(x)$.
Then there are unique elements $s_j\in S$, $r_j \in R$ such that 
$x_j=s_j+r_j$, and writing $s=(s_1,\ldots, s_n)$, $r=(r_1,\ldots, r_n)$
we get the decomposition $x=s+r$. This can be regarded as a
``Levi-decomposition'' of the $n$-tuple $x$.

\begin{rmk}
 Note that the decomposition of the $n$-tuple $x$ into $x=s+r$ is 
 in general not unique, but depends on the choice of the semi-simple
 subalgebra $S(x)$.
 
 However, in the case $n=1$ the subalgebra $A(x)$ is generated by one
 element and is thus associative. Consequently, decomposition
 $x=s+r$ is unique and is the usual decomposition of $x$ into its 
 semi-simple and nilpotent part. 
 
 More generally, the decomposition $x=s+r$ is unique whenever the elements
 of the radical $R(x)$ operator-commute with all elements in $A(x)$, i.e.
 $[A(x), R(x)]=0$.
\end{rmk}

\begin{prop}
 Let $x=s+r$ be a decomposition of an $n$-tuple $x\in J\times\ldots \times J$
 as above. 
 Then $\Aut(J)\cdot s$ is the unique closed orbit in the closure
 $\overline{\Aut(J)\cdot x}$ of the $\Aut(J)$-orbit through~$x$.
\end{prop}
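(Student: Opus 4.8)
The plan is to show two things: first that $\overline{\Aut(J)\cdot s}\subseteq\overline{\Aut(J)\cdot x}$ via a one-parameter subgroup degeneration of $x$ to $s$, and second that $\Aut(J)\cdot s$ is closed, from which uniqueness follows because a closure always contains a \emph{unique} closed orbit. For the first part, I would invoke Proposition~\ref{prop: ex of Jordan grading} applied to the subalgebra $A=A(x)$ with its decomposition $A(x)=S(x)+R(x)$: this yields a one-parameter subgroup $\lambda:\mathbb K^\times\to\Aut(J)$ with $A(x)\subseteq J_{\geq 0}(\lambda)$, $S(x)\subseteq J_0(\lambda)$, and $R(x)\subseteq J_{>0}(\lambda)$. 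Since $x_j=s_j+r_j$ with $s_j\in S(x)\subseteq J_0(\lambda)$ and $r_j\in R(x)\subseteq J_{>0}(\lambda)$, the limit $\lim_{t\to 0}\lambda(t)\cdot x_j$ exists and equals $s_j$; hence $\lim_{t\to 0}\lambda(t)\cdot x=s$, so $s\in\overline{\Aut(J)\cdot x}$ and therefore $\overline{\Aut(J)\cdot s}\subseteq\overline{\Aut(J)\cdot x}$.

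For the second part, I must show $\Aut(J)\cdot s$ is closed, and by the theorem characterizing closed orbits it suffices to show that $A(s)$, the subalgebra generated by $s_1,\dots,s_n$, is semi-simple. Here $s_j\in S(x)$, so $A(s)\subseteq S(x)$, and $S(x)$ is semi-simple; but a subalgebra of a semi-simple Jordan algebra need not itself be semi-simple, so this requires a genuine argument. The cleanest route: $S(x)$ is a semi-simple subalgebra of $J$, so by Theorem~\ref{thm: Levi decomp} (or directly because the trace form of $J$ restricts non-degenerately to each simple summand) one can replace $S(x)$, up to strict conjugation by an inner automorphism of $J$, by a semi-simple subalgebra for which the situation is transparent; alternatively, one argues that $A(s)$ is semi-simple because $s$ and $x$ have been arranged so that $S(x)$ plays the role of a Levi factor and $A(s)=S(x)$ whenever $S(x)$ is chosen minimally — more precisely, since $A(x)=S(x)+R(x)$ and the $x_j$ generate $A(x)$, projecting modulo $R(x)$ shows the images $\bar x_j=\bar s_j$ generate $A(x)/R(x)\cong S(x)$, hence $s_1,\dots,s_n$ together with $R(x)$ generate $A(x)$, and $A(s)$ maps onto $S(x)$. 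Then $A(s)$ is a subalgebra of the semi-simple algebra $S(x)$ whose radical would be a nilpotent ideal; one shows this radical is zero by noting that $A(s)\supseteq$ a complement of $R(x)$ inside a semi-simple algebra, so $A(s)=S(x)$ is semi-simple.

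The main obstacle is precisely this second step: proving that the subalgebra generated by the $s_j$ is semi-simple, i.e. controlling the radical of $A(s)$ inside $S(x)$. The subtlety is that "subalgebra of semi-simple need not be semi-simple," so one genuinely needs to use that the $s_j$ are the $S(x)$-components of generators $x_j$ of $A(x)=S(x)+R(x)$, which forces $A(s)$ to surject onto $S(x)\cong A(x)/R(x)$; combined with $A(s)\subseteq S(x)$ this gives $A(s)=S(x)$, which is semi-simple. Once that is in hand, applying the closed-orbit theorem gives that $\Aut(J)\cdot s$ is closed, and since $s\in\overline{\Aut(J)\cdot x}$ and every orbit closure contains exactly one closed orbit, $\Aut(J)\cdot s$ is \emph{the} unique closed orbit in $\overline{\Aut(J)\cdot x}$, completing the proof.
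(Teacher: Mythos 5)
Your proposal is correct and follows essentially the same route as the paper: degenerate $x$ to $s$ via the one-parameter subgroup from Proposition~\ref{prop: ex of Jordan grading}, then show $A(s)=S(x)$ (the paper phrases this as ``$A(s)+R(x)$ is a subalgebra containing all the $x_j$'', you phrase it as surjectivity of $A(s)$ onto $A(x)/R(x)$ --- the same argument) and conclude closedness from the semi-simplicity criterion, with uniqueness from the general fact about orbit closures of reductive groups.
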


\begin{proof}
 Let $\lambda:\mathbb K^\times \rightarrow \Aut(J)$ be a one-parameter
 subgroup such that $s\in J_0(\lambda)$ and $r\in J_{>0}(\lambda)$
 (c.f. Proposition~\ref{prop: ex of Jordan grading}).
 Then $s=\lim_{t\to 0}\lambda(t)\cdot x$ is contained in 
 $\overline{\Aut(J)\cdot x}$.
 
 The subalgebra $A(s)$ generated by $s_1,\ldots, s_n$ is contained
 in $S(x)$ since $s_j\in S(x)$ for each $j$. 
 Moreover, $x_j=s_j+r_j\in A(s)+R(x)$.
 As $S_1\oplus R(x)$ is a subalgebra of $J$ for any subalgebra~$S_1$ of 
 $S(x)$,
 we get $A(s)=S(x)$, and $A(s)$ is semi-simple.
 It follows that the orbit $\Aut(J)\cdot s$ is closed.
\end{proof}


\begin{thebibliography}{Koe99}

\bibitem[Alb47]{Albert}
A.~A. Albert, \emph{A structure theory for {J}ordan algebras}, Ann. of Math.
  (2) \textbf{48} (1947), 546--567. \MR{0021546 (9,77f)}

\bibitem[Bir71]{Birkes}
David Birkes, \emph{Orbits of linear algebraic groups}, Ann. of Math. (2)
  \textbf{93} (1971), 459--475. \MR{0296077 (45 \#5138)}

\bibitem[BK66]{BraunKoecher}
Hel Braun and Max Koecher, \emph{Jordan-{A}lgebren}, Die Grundlehren der
  mathematischen Wissenschaften in Einzeldarstellungen mit besonderer
  Ber\"ucksichtigung der Anwendungsgebiete, Band 128, Springer-Verlag, Berlin,
  1966.

\bibitem[FK94]{FarautKoranyi}
Jacques Faraut and Adam Kor{\'a}nyi, \emph{Analysis on symmetric cones}, Oxford
  Mathematical Monographs, The Clarendon Press Oxford University Press, New
  York, 1994, Oxford Science Publications.

\bibitem[Hoc81]{Hochschild_AlgGroups}
Gerhard~P. Hochschild, \emph{Basic theory of algebraic groups and {L}ie
  algebras}, Graduate Texts in Mathematics, vol.~75, Springer-Verlag, New York,
  1981. \MR{620024 (82i:20002)}

\bibitem[Jac51]{Jacobson}
N.~Jacobson, \emph{General representation theory of {J}ordan algebras}, Trans.
  Amer. Math. Soc. \textbf{70} (1951), 509--530. \MR{0041118 (12,797d)}

\bibitem[Jac68]{Jacobson_book}
Nathan Jacobson, \emph{Structure and representations of {J}ordan algebras},
  American Mathematical Society Colloquium Publications, Vol. XXXIX, American
  Mathematical Society, Providence, R.I., 1968. \MR{0251099 (40 \#4330)}

\bibitem[KN79]{KempfNess}
George Kempf and Linda Ness, \emph{The length of vectors in representation
  spaces}, Algebraic geometry ({P}roc. {S}ummer {M}eeting, {U}niv.
  {C}openhagen, {C}openhagen, 1978), Lecture Notes in Math., vol. 732,
  Springer, Berlin, 1979, pp.~233--243. \MR{555701 (81i:14032)}

\bibitem[Koe99]{Koecher}
Max Koecher, \emph{The {M}innesota notes on {J}ordan algebras and their
  applications}, Lecture Notes in Mathematics, vol. 1710, Springer-Verlag,
  Berlin, 1999, Edited, annotated and with a preface by Aloys Krieg and
  Sebastian Walcher. \MR{1718170 (2001e:17040)}

\bibitem[KR69]{KostantRallis}
Bertram Kostant and Stephen Rallis, \emph{On orbits associated with symmetric
  spaces}, Bull. Amer. Math. Soc. \textbf{75} (1969), 879--883. \MR{0257284 (41
  \#1935)}

\bibitem[Pen51]{Penico}
A.~J. Penico, \emph{The {W}edderburn principal theorem for {J}ordan algebras},
  Trans. Amer. Math. Soc. \textbf{70} (1951), 404--420. \MR{0041120 (12,798b)}

\bibitem[Ric88]{Richardson}
R.~W. Richardson, \emph{Conjugacy classes of {$n$}-tuples in {L}ie algebras and
  algebraic groups}, Duke Math. J. \textbf{57} (1988), no.~1, 1--35. \MR{952224
  (89h:20061)}

\bibitem[Sat80]{Satake}
Ichir{\^o} Satake, \emph{Algebraic structures of symmetric domains}, Kan\^o
  Memorial Lectures, vol.~4, Iwanami Shoten, Tokyo, 1980. \MR{591460
  (82i:32003)}

\end{thebibliography}
\end{document}